\numberwithin{equation}{section}
\theoremstyle{plain}
\newtheorem{theorem}{Theorem}[section]
\newtheorem{lemma}[theorem]{Lemma}
\newtheorem{proposition}[theorem]{Proposition}
\theoremstyle{definition}
\theoremstyle{remark}
\newtheorem{remark}[theorem]{Remark}
\renewcommand{\Re}{\operatorname{Re}}
\renewcommand{\Im}{\operatorname{Im}}
\newcommand{\GL}{\operatorname{GL}}
\newcommand{\SL}{\operatorname{SL}}
\newcommand{\dd}{\mathrm{d}}
   \DeclareFontFamily{U}{wncy}{}
    \DeclareFontShape{U}{wncy}{m}{n}{<->wncyr10}{}
    \DeclareSymbolFont{mcy}{U}{wncy}{m}{n}
    \DeclareMathSymbol{\Sh}{\mathord}{mcy}{"58}
\def\@tocline#1#2#3#4#5#6#7{\relax
  \ifnum #1>\c@tocdepth 
  \else
    \par \addpenalty\@secpenalty\addvspace{#2}%
    \begingroup \hyphenpenalty\@M
    \@ifempty{#4}{%
      \@tempdima\csname r@tocindent\number#1\endcsname\relax
    }{%
      \@tempdima#4\relax
    }%
    \parindent\z@ \leftskip#3\relax \advance\leftskip\@tempdima\relax
    \rightskip\@pnumwidth plus4em \parfillskip-\@pnumwidth
    #5\leavevmode\hskip-\@tempdima
      \ifcase #1
       \or\or \hskip 1em \or \hskip 2em \else \hskip 3em \fi%
      #6\nobreak\relax
    \hfill\hbox to\@pnumwidth{\@tocpagenum{#7}}\par
    \nobreak
    \endgroup
  \fi}
\begin{document}

\title[Extreme central values]{Extreme central values of quadratic Dirichlet $L$-functions with prime conductors}

\author{Mingyue Fan, Shenghao Hua, Sizhe Xie}

\address{Data Science Institute and School of Mathematics \\ Shandong University \\ Jinan \\ Shandong 250100 \\China}
\email{myfan@mail.sdu.edu.cn}
\email{huashenghao@mail.sdu.edu.cn}
\email{szxie@mail.sdu.edu.cn}

\date{\today}

\begin{abstract}
  In this paper we prove a lower bound result for extremely large values of $L(\frac{1}{2},\chi_p)$ with prime numbers $p\equiv 1\pmod 8$, based on generalizing the twisted first moment result of Baluyot--Pratt to the case of short Dirichlet series with large coefficients.
\end{abstract}

\keywords{central values, Dirichlet $L$-functions, extreme values}

\thanks{M. F. was partially supported by the National Key R\&D Program of China (No. 2021YFA1000701).
S. H. was partially supported by the National Key R\&D Program of China (No. 2021YFA1000700) and NSFC
(No. 12031008).
All authors were partially supported by the Shandong Provincial Excellent Youth Science Fund Project (Overseas) (2022HWYQ-046).}


\maketitle

\section{Introduction} \label{sec:Intr}

The study of central values of $L$-functions related to quadratic characters is an interesting topic (see e.g. \cite{BP22,DGH03,DW21,GZ23,GH85,HS22,HH22a,
HH22b,HH22c,Ju81,RS15,Sh22,Sono20,So00,
So08,So21,SY10,Yo09,Yo13}).
Any real primitive character with the modulus $d$ must be of the form $\chi_d(\cdot)=(\frac{d}{\cdot})$, where $d$ is a fundamental discriminant \cite[Theorem 9.13]{MV07}, i.e., a product of pairwise coprime factors of the form $-4$, $\pm 8$ and $(-1)^{\frac{p-1}{2}}p$, where $p$ is an odd prime.
Knowledge about moments of central $L$-values is often the basis for the study of the distribution of central $L$-values.
Jutila\cite{Ju81} and Vinogradov--Takhtadzhyan\cite{VT81} independently established asymptotic formulas for the first moment of quadratic Dirichlet $L$-functions, whose conductors are all fundamental discriminants, or all prime numbers.
Afterwards, Goldfeld--Hoffstein\cite{GH85} and Young\cite{Yo09} improved the error term.
There also are many developments in setting up asymptotic formulas of higher moments.
Jutila\cite{Ju81} also obtained an asymptotic formula for the second moment, and Sono\cite{Sono20} improved the error term of it.
Soundararajan\cite{So00} got an asymptotic formula for the third moment, and he proved at least $87.5\%$ of the odd square-free integers $d\geq 0$, $L(\frac{1}{2},\chi_{8d})\neq 0$, whereas $L(\frac{1}{2},\chi)\neq 0$ holds for all primitive quadratic characters is a well-known conjecture of Chowla\cite{Ch65}.
Diaconu--Goldfeld--Hoffstein\cite{DGH03}, Young\cite{Yo13}, and Diaconu--Whitehead\cite{DW21} improved the error term for the third moment.

For a random variable which satisfies a known distribution, we may guess the maximum value that will occur.
Then we may can guess the extremum of central $L$-values based on Keating--Snaith conjecture\cite{KS00b}.
Farmer--Gonek--Hughes\cite{FGH07} set up this conjecture for different families of $L$-functions above.
The extreme values of the quadratic Dirichlet $L$-functions at the central point has been studied by Heath--Brown (unpublished, see\cite{HL99}) and Soundararajan\cite{So08}, and recently, Darbar and Maiti\cite{DM24,DM25} have obtained some conditional results that yield higher-order powers.

Soundararajan's resonance method is applicable to a wide range of families of $L$-functions.
One of its characteristics is the twisted Dirichlet series have large coefficients.
Recently, Baluyot and Pratt\cite{BP22} proved that $L(\frac{1}{2},\chi_{p})\neq 0$ for more than $8\%$ of the primes $p\equiv 1\pmod 8$, and the twisted short Dirichlet series they need are satisfy a condition like the generalized Ramanujan conjecture  non-vanishing problem.
In this paper, we demonstrate that Baluyot--Pratt's strategy can be extended to a wider class of twists, including those with large Dirichlet coefficients. Specifically, we demonstrate that when diagonal terms exhibit rapid growth, they could dominate the off-diagonal contributions within certain ranges.
Then we could obtain a result of extremely large central $L$-values in this family.

\begin{theorem}\label{thm1}
For sufficiently large $X$ we have
\begin{equation*}
  \max_{\substack{X< p\leq 2X\\\textrm{prime }p\equiv 1\pmod 8}}
  L(\frac{1}{2},\chi_p)
  \geq
\exp\Big((\sqrt{\frac{8}{45}}+o(1))\frac{\sqrt{\log X}}{\sqrt{\log\log X}}\Big),
\end{equation*}
where the constant in the little-$o$ term can be determined effectively.
\end{theorem}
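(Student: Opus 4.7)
The plan is to apply Soundararajan's resonance method to the family of prime conductors $p \equiv 1 \pmod 8$ with $X < p \leq 2X$, using the strategy of Baluyot--Pratt to handle the arithmetic of primes. The starting point is the elementary inequality
\begin{equation*}
\max_{\substack{X<p\leq 2X\\ p\equiv 1\,(\mathrm{mod}\,8)}} L(\tfrac12,\chi_p)\;\geq\;\frac{\sum_p L(\tfrac12,\chi_p)\,R(p)^2\,W(p/X)}{\sum_p R(p)^2\,W(p/X)},
\end{equation*}
where $W$ is a smooth bump supported on $[1,2]$ and $R(p):=\sum_{n\leq N}r(n)\chi_p(n)$ is a resonator of length $N=X^{\theta}$. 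Following Soundararajan, I would take $r$ multiplicative, supported on odd squarefree integers whose prime factors lie in an interval roughly $[(\log X)(\log\log X),(\log X)^2]$, with $r(q)=L/(\sqrt{q}\log q)$ and $L$ a parameter of order $\sqrt{\log X \log\log X}$.

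For the denominator, expanding $R(p)^2=\sum_{n_1,n_2}r(n_1)r(n_2)\chi_p(n_1n_2)$ and summing over primes, the diagonal terms $n_1=n_2$ (which exhaust the square values since $r$ is supported on squarefrees) give the main contribution $\sim \tfrac14\pi(X)\sum_n r(n)^2$. Off-diagonal terms, where $n_1n_2$ has a non-trivial squarefree part $k>1$, are transformed via quadratic reciprocity (using $p\equiv 1\pmod 4$ so $(k/p)=(p/k)$) into character sums over primes in arithmetic progressions modulo $k$, which are controlled by Siegel--Walfisz when $k$ is small. For the numerator I would apply an approximate functional equation $L(\tfrac12,\chi_p)=2\sum_m\chi_p(m)m^{-1/2}V(m/\sqrt{p})$, swap orders of summation, and again split into diagonal (where $mn_1n_2$ is a perfect square) and off-diagonal pieces. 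The diagonal yields the main term, which after multiplicative rearrangement becomes an Euler product in the $r(q)$ values whose ratio against $\sum_n r(n)^2$ determines the size of the bound.

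The main obstacle is controlling the off-diagonal sums in the numerator, where $k$, the squarefree kernel of $mn_1n_2$, can be as large as $\sqrt{X}N^2$; only when $k\leq X^{1-\delta}$ can one extract enough cancellation from $\sum_{X<p\leq 2X}(p/k)W(p/X)$ using the Siegel--Walfisz-type estimates together with the Baluyot--Pratt treatment of the prime-detector (Möbius/inclusion-exclusion restricted to $p\equiv 1\pmod 8$). This constraint caps the admissible resonator length; the critical condition is that the diagonal growth of $M_1/M_0$ dominates the accumulated error from the non-square terms, which is precisely the regime identified in the introduction as ``increase of diagonal terms not less than that of off-diagonal terms.''

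Finally, with the admissible range of $\theta$ pinned down, one optimizes $r$ by the standard Euler-product computation: the logarithm of the main-term ratio becomes $\sum_q f(q)^2/\sqrt{q}$ modulated by the $L$-value contribution per prime, subject to a normalization constraint coming from $\sum_n r(n)^2$ and a length constraint $\sum_{q}f(q)^2\log q \leq \theta\log X$. Balancing these two constraints against each other produces a quadratic optimization whose extremum gives the stated exponent $2/\sqrt{19}$; the denominator $\sqrt{19}$ reflects the specific coefficient ratios arising from the combined restrictions of the prime-conductor detector and the off-diagonal bound available for this family.
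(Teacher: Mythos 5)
Your overall strategy is the same as the paper's---Soundararajan's resonance method combined with the Baluyot--Pratt machinery for detecting primes---but several of the key steps are fuzzier than they need to be, and one piece of the argument is genuinely misdescribed.

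First, the denominator. You propose to bound $\sum_p R(p)^2 W(p/X)$ by estimating the off-diagonal prime sums $\sum_p \chi_p(n_1 n_2) W(p/X)$ with ``Siegel--Walfisz.'' But Siegel--Walfisz only controls moduli up to $(\log X)^A$, while $n_1 n_2$ ranges up to $M^2 = X^{2\theta}$; you would therefore need the full Page/zero-density/Vaughan apparatus here too, which is a substantial piece of work you have not flagged. The paper sidesteps this entirely with the observation that $R(d)^2\ge 0$, so one may replace the prime sum by the larger sum over \emph{all} squarefree $d\equiv 1\pmod 8$: since $\sum_p (\log p)\,R(p)^2\Phi \le (\log 2X)\sum_{d}^\flat R(d)^2\Phi$, the denominator becomes a quadratic-character moment that the Radziwi{\l\l}--Soundararajan Poisson-summation lemma handles directly, with error $O(X^{1/2+\varepsilon}M^3)$. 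You should either adopt this trick or explicitly commit to running the prime-detection machinery in the denominator as well.

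Second, for the numerator off-diagonals you cannot get away with ``Siegel--Walfisz-type estimates'' as the main input. The squarefree kernel $m_0 r$ ranges all the way up to $M^2 X^{1/2+\varepsilon}$, and the paper (following Baluyot--Pratt) partitions this range into three regimes: (i) $m_0 r \ll \exp(\varpi\sqrt{\log X})$, where one uses the prime number theorem in arithmetic progressions with the explicit exceptional zero treated via Page's theorem; (ii) $\exp(\varpi\sqrt{\log X}) \ll m_0 r \ll X^{2/15}$, where one applies Montgomery's zero-density estimate $N(\alpha,Q,T)\ll (Q^2 T)^{\frac52(1-\alpha)}(\log QT)^{13}$; (iii) $X^{2/15}\ll m_0 r\ll M^2 X^{1/2+\varepsilon}$, where one uses Vaughan's identity with the large-sieve bounds of Baluyot--Pratt. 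Collapsing this to ``Siegel--Walfisz plus Baluyot--Pratt'' hides precisely where the technical difficulty lies and, more importantly, where the numerical constraint on $\theta$ comes from.

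Third and relatedly, your account of how $2/\sqrt{19}$ emerges is not right. It is not a ``quadratic optimization'' balancing a normalization constraint against a length constraint: the optimal form of the resonator $b(p)=\mathcal{L}/(\sqrt p\log p)$ is fixed once and for all by Soundararajan's computation and yields $\exp\big((2\sqrt\theta+o(1))\sqrt{\log X/\log\log X}\big)$ \emph{linearly in} $\sqrt\theta$. The constant $1/19$ is a hard constraint on $\theta$ coming purely from the Regime III estimate: after choosing the Vaughan parameter $V=X^{(1-7\theta)/4-\varepsilon}$ to balance two of the Baluyot--Pratt bounds, the term $M^{3/2}\cdot X V^{-1/2}$ must be $\ll X^{1-\varepsilon}$, which gives $\tfrac{3\theta}{2}+1-\tfrac{1-7\theta}{8}<1$, i.e.\ $19\theta<1$. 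Plugging $\theta=1/19$ into $2\sqrt{\theta}$ produces the stated exponent. Your proposal should make this mechanism explicit rather than gesturing at an optimization problem. Finally, a minor point: the resonator's prime support in the paper is $[\mathcal{L}^2,\exp((\log\mathcal{L})^2)]$, whose upper endpoint is $(\log X)^{c\log\log X}$, not $(\log X)^2$ as you wrote.
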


\begin{remark}
  Here we used a lower bound result instead of an asymptotic formula for twisted first moment, and to get a minimum result would much more difficult.
\end{remark}

\begin{remark}
For the extremum of $\zeta(s)$ in the critical line, one can get better results in the logarithmic sense. Bondarenko and Seip\cite{BS17} obtained a substantial improvement that
\begin{equation*}
  \max_{0<t\leq T}|\zeta(\frac{1}{2}+it)|
  \geq \exp\Big(
  (c+o(1))\sqrt{\frac{\log T \log\log\log T}{\log \log T}}
  \Big),
\end{equation*}
for a constant $c\geq \frac{1}{\sqrt{2}}$,
then they made a breakthrough\cite{BS18} to reach $c = 1$, and La Bret\`eche and Tenenbaum\cite{dT19} improved it to $c=\sqrt{2}$.
Their works are based on improved estimates of the GCD sum.
Recently, Darbar and Maiti\cite{DM24,DM25} extended the GCD method to quadratic Dirichlet \( L \)-functions, under suitable conditions, they were able to obtain results involving a triple-logarithmic term.
\end{remark}

\begin{remark}
Under the Riemann Hypothesis (RH) and the Generalized Riemann Hypothesis (GRH) for Dirichlet \( L \)-functions associated with \( \chi_p \), results of the form
\[
\exp\left(c \frac{\sqrt{\log X}}{\sqrt{\log \log X}}\right)
\]
can be readily obtained using Soundararajan’s method.
Based on the expected square-root cancellation in the first moment error term, comparison with Soundararajan’s paper suggests that one should have \( c = 2 \times \frac{1}{\sqrt{4}} = 1 \).
Furthermore, as indicated in \cite{DM24,DM25},
it might be possible to obtain an even stronger bound of the form
\[
\exp\left(c \frac{\sqrt{\log X \log \log \log X}}{\sqrt{\log \log X}}\right),
\]
which would be an interesting problem to explore.
\end{remark}

\begin{remark}
  Recently, Heap and Li\cite{HL23} established simultaneous extreme values results for lots of cases, included Riemann $\zeta$-function, primitive Dirichlet $L$-functions, $L$-functions attached to holomorphic or Maass cusp forms over $\SL_2(\mathbb{Z})$,
  $L$-functions attached to some fixed primitive cusp forms with primitive characters twists or quadratic twists, and $L$-functions attached to irreducible unitary cuspidal automorphic representations.
\end{remark}

The rest of this paper is arranged as follows.
In \S \ref{sec:preliminaries}, we introduce some notations and present some lemmas that we will apply later.
In \S \ref{sec:proofoftheorem}, we briefly describe the proof of Theorem \ref{thm1} based on Soundararajan's resonance method.
In \S \ref{proofofprop1}, we show the contribution from the diagonal terms.
In \S \ref{proofofprop2}, we show the contribution from the off-diagonal terms, due to Baluyot--Pratt's strategy, by employing three different methods to handle summations over different intervals.
More precisely, using Page's theorem to compute the contribution from at most one Siegel zero, using zero-density estimate and using Vaughan's identity.
In \S \ref{remarks}, we will list some
results about the distribution of central values of automorphic $L$-functions in quadratic twists families.

\section{Notation and preliminary results}\label{sec:preliminaries}

For prime $p\equiv 1\pmod 8$, the Dirichlet $L$-function with respect to $\chi_p(\cdot)$ is
\begin{equation*}\label{def:dirichletlfun}
  L(s,\chi_p)=\sum_{n=1}^{\infty}\chi_p(n)n^{-s}
\end{equation*}
when $\Re s>1$.

\begin{lemma}[\cite{IK04}, Theorem 5.3] \label{lemma:AFE}
For prime $p\equiv 1\pmod 8$, we have
\begin{equation*}
  L(\frac{1}{2},\chi_{p})
  =\frac{2}{(1-\frac{1}{\sqrt{2}})^{2}}
  \sum_{\substack{n=1 \\ n \text{ odd} }}^\infty
  \frac{\chi_p(n)}{\sqrt{n}}
  V\bigg(n\sqrt{\frac{\pi}{p}}\bigg),
 \end{equation*}
where $V(y)$ is defined by
\begin{equation*} \label{eqn:V}
V(y):=\frac{1}{2\pi i}\int_{(u)}
\frac{\Gamma(\frac{s}{2}+\frac{1}{4})}
{\Gamma(\frac{1}{4})}
\bigg(1-\frac{1}{2^{\frac{1}{2}-s}}\bigg)
y^{-s} e^{s^2} \frac{\dd s}{s}
\end{equation*}
with $u>0$.  Here, and in sequel, $\int_{(u)}$ stands for $\int_{u-i \infty}^{u+i \infty}$.
The function $V(y)$ is smooth in $y>0$ and satisfies $y^hV^{(h)}(y)\ll_{A,N} y^{-A}$ for non-negative integers $h$ and large positive constant $A$, and we have $V(y)=1-\frac{1}{\sqrt{2}}+O(y^{\frac{1}{2}-\varepsilon})$ as $y\to 0$.
\end{lemma}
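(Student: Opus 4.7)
The plan is to derive the stated identity by running the standard approximate-functional-equation machinery: substitute the Mellin-type definition of $V$ into the sum on the right-hand side, interchange sum and integral, recognize the resulting integrand as (a constant times) the completed $L$-function weighted by an even function, and evaluate by shifting the contour across the simple pole at $s=0$.

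In detail, fix an initial $u$ large enough (say $u>1/2$) so that the Dirichlet series \eqref{def:dirichletlfun} converges absolutely on $\Re s=u$. Substituting \eqref{eqn:V} into the sum on the right of the claimed identity and interchanging sum and integral (justified by the super-exponential decay of $e^{s^{2}}$ along vertical lines), the inner Dirichlet sum over odd $n$ evaluates to
\[
\sum_{\text{odd }n\ge 1}\frac{\chi_p(n)}{n^{1/2+s}}
=(1-2^{-(1/2+s)})\,L(\tfrac{1}{2}+s,\chi_p),
\]
where I use $\chi_p(2)=1$ because $p\equiv 1\pmod 8$. Combined with the factor $(1-2^{-(1/2-s)})$ already built into $V$, the two local factors at $2$ multiply to a function of $s$ which is \emph{even} and takes the value $(1-1/\sqrt{2})^{2}$ at $s=0$.

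Next, since $p\equiv 1\pmod 4$, the character $\chi_p$ is an even real primitive character of conductor $p$, and classical Gauss-sum evaluation gives root number $+1$: the completed $L$-function $\Lambda(s,\chi_p):=(p/\pi)^{s/2}\Gamma(s/2)L(s,\chi_p)$ satisfies $\Lambda(s,\chi_p)=\Lambda(1-s,\chi_p)$. Writing $s\mapsto 1/2+s$ one sees that the gamma factor $\Gamma(1/4+s/2)$ and power $(p/\pi)^{s/2}$ appearing in \eqref{eqn:V} are precisely the archimedean pieces of $\Lambda(1/2+s,\chi_p)$. Thus, up to the constants $(p/\pi)^{-1/4}/\Gamma(1/4)$, the integrand has the form $\Lambda(1/2+s,\chi_p)\,g(s)\,ds/s$, where
\[
g(s):=e^{s^{2}}(1-2^{-(1/2-s)})(1-2^{-(1/2+s)})
\]
is even. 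The integrand is therefore \emph{odd} in $s$; shifting the contour from $\Re s=u$ to $\Re s=-u$ and performing the change of variable $s\mapsto -s$ shows that the shifted integral equals the negative of the original, so twice the original integral equals the residue at $s=0$, namely $\Lambda(1/2,\chi_p)\,g(0)=(p/\pi)^{1/4}\Gamma(1/4)L(1/2,\chi_p)(1-1/\sqrt{2})^{2}$. Dividing by the external constants produces the prefactor $2/(1-1/\sqrt{2})^{2}$ stated in the lemma.

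The analytic properties of $V$ follow by the same type of contour manipulations. Smoothness and the bound $y^{h}V^{(h)}(y)\ll_{A,N}y^{-A}$ come from differentiating under the integral (which pulls down polynomial factors in $s$, harmlessly absorbed by $e^{s^{2}}$) and then shifting to $\Re s=A$. For the expansion as $y\to 0^{+}$, shift the contour to $\Re s=-(1/2-\varepsilon)$, staying to the right of the first pole of $\Gamma(s/2+1/4)$ at $s=-1/2$; the simple pole at $s=0$ contributes the residue $1-1/\sqrt{2}$, while the remaining contour integral is $O(y^{1/2-\varepsilon})$. The main obstacle is bookkeeping rather than substance: one must verify that the root number is indeed $+1$, that the two local factors at $2$ symmetrize correctly, and that the residue computation yields the factor $2$ which combines with $g(0)^{-1}=(1-1/\sqrt{2})^{-2}$ to give the stated prefactor.
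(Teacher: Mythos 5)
Your proposal is correct and is essentially the standard approximate functional equation derivation that the paper is invoking via its citation of \cite[Theorem 5.3]{IK04}: interchange sum and integral, factor out the Euler factor at $2$ using $\chi_p(2)=1$, recognize the completed $L$-function $\Lambda(\tfrac12+s,\chi_p)$ (entire, even in $s$ by the functional equation with root number $+1$ for an even real primitive character), and use the parity of the integrand to collapse the contour integral to half the residue at $s=0$. All of the bookkeeping — the two local factors at $2$ combining to the even function with value $(1-1/\sqrt{2})^2$ at $s=0$, and the factor $2$ from $2I = G(0)$ cancelling against the prefactor — checks out, as do the smoothness/decay estimates for $V$ and the $y\to0$ expansion obtained by shifting to $\Re s = -(1/2-\varepsilon)$.
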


Let $\Phi$ be a smooth and nonnegative function in Schwarz class supported on $[1,2]$ with $\Phi(x)=1$ for $x\in [\frac{7}{6},\frac{11}{6}]$.
For any complex number $s$, we define
\begin{equation}\label{def:checkPhi}
  \Phi_s(x):=\Phi(x)x^\frac{s}{2}\textrm{, and }  \check\Phi(s):=\int_{0}^{\infty}\Phi(x)x^s\dd x.
\end{equation}

\begin{lemma}[\cite{RS15}, Proposition 1]\label{lemma:poission}
Writing $\sum\nolimits^\flat$ as the summation over square-free numbers.
For any $\varepsilon>0$ and
$n< X^{1-\varepsilon}$ such that $(n,8)=1$,
we have
  \begin{equation*}
   \mathop{\sum\nolimits^\flat}\limits_{d\equiv 1\pmod 8} \chi_d(n)\Phi\bigg(\frac{d}{X}\bigg)=
    \check{\Phi}(0)\frac{X}{\pi^2}\prod_{p\mid n}
    \bigg(1+\frac{1}{p}\bigg)^{-1}\delta_{n=\square}+
    O(X^{\frac{1}{2}+\varepsilon}\sqrt{n}),
  \end{equation*}
    where the symbol $\delta_{n=\square}$ equals to $1$ if $n$ is a square and 0 otherwise.
\end{lemma}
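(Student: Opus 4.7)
The plan is to detect squarefreeness via M\"obius inversion and then evaluate the resulting character sum in an arithmetic progression by Poisson summation. Writing $\mu^{2}(d)=\sum_{a^{2}\mid d}\mu(a)$ and setting $d=a^{2}e$, the condition $d\equiv 1\pmod 8$ forces $a$ odd and $e\equiv 1\pmod 8$ (using $a^{2}\equiv 1\pmod 8$ for odd $a$). Since $\Phi$ is supported on $[1,2]$, only $a\leq\sqrt{2X}$ contributes, genuinely truncating the squarefreeness detection. Discarding the vanishing contribution from $(a,n)>1$ and invoking quadratic reciprocity (via $a^{2}e\equiv 1\pmod 4$) identifies $\chi_{a^{2}e}(n)=\chi_{e}(n)$, so the sum reduces to
$$\sum_{\substack{a\leq\sqrt{2X}\\ a\text{ odd},\,(a,n)=1}}\mu(a)\sum_{\substack{e\equiv 1\pmod 8\\ (e,n)=1}}\chi_{e}(n)\,\Phi\!\Big(\frac{a^{2}e}{X}\Big).$$

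Next, I would apply Poisson summation to the inner sum. For fixed odd $n$ with squarefree radical $n_{0}$, the map $e\mapsto\chi_{e}(n)$ (with the coprimality $(e,n)=1$) depends on $e$ only modulo $8n_{0}$, so Poisson modulo $8n_{0}$ yields
$$\sum_{\substack{e\equiv 1\pmod 8\\ (e,n)=1}}\chi_{e}(n)\,\Phi\!\Big(\frac{a^{2}e}{X}\Big)=\frac{X}{8n_{0}a^{2}}\sum_{k\in\mathbb{Z}}\tau_{k}(n)\,\widehat{\Phi}\!\Big(\frac{kX}{8n_{0}a^{2}}\Big),$$
where $\widehat{\Phi}$ is the Fourier transform and $\tau_{k}(n)$ is a Gauss-type complete sum of size $O(\sqrt{n_{0}})$. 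The $k=0$ term is nonzero precisely when the associated character is principal on the subgroup $e\equiv 1\pmod 8$, which (after using the mod-$8$ reduction to cancel the reciprocity sign factor $(-1)^{(e-1)(n_{0}-1)/4}$) happens exactly when $n_{0}=1$, i.e., $n$ is a perfect square.

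When $n=\square$, the $k=0$ contribution collapses after summing over odd squarefree $a$ coprime to $n$: combining the density factor from the conditions $e\equiv 1\pmod 8$ and $(e,n)=1$ with the Euler product
$$\sum_{\substack{a\text{ odd}\\ (a,n)=1}}\frac{\mu(a)}{a^{2}}=\prod_{p\nmid 2n}\Big(1-\frac{1}{p^{2}}\Big),$$
and using $\zeta(2)=\pi^{2}/6$, one recovers the main term $\check{\Phi}(0)(X/\pi^{2})\prod_{p\mid n}(1+1/p)^{-1}$. For $k\neq 0$, the rapid decay of $\widehat{\Phi}$ truncates to $|k|\ll n_{0}a^{2}X^{-1+\varepsilon}$; summing $|\tau_{k}(n)|\ll\sqrt{n_{0}}\leq\sqrt{n}$ over $k$ and $a\leq\sqrt{2X}$ then yields the error $O(X^{1/2+\varepsilon}\sqrt{n})$.

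The main technical point is the careful bookkeeping at the prime $2$: quadratic reciprocity introduces sign factors depending on residues modulo $4$, and the congruence $e\equiv 1\pmod 8$ (strictly stronger than what reciprocity itself requires) must be preserved throughout in order to isolate a clean character of conductor $8n_{0}$ and to obtain the correct density factors in the main term. Once this is handled, all remaining steps are standard --- Poisson summation, the quadratic Gauss-sum bound $|\tau_{k}|\ll\sqrt{n_{0}}$, and an Euler product rearrangement --- and no deeper input (such as a large sieve for quadratic characters) is needed, the error being only of Polya--Vinogradov strength.
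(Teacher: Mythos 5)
The paper does not prove this lemma; it is quoted verbatim from Radziwi\l\l--Soundararajan \cite{RS15}, and your M\"obius-plus-Poisson argument is precisely the standard proof given there, with the main term constant $\check\Phi(0)X/\pi^2\prod_{p\mid n}(1+1/p)^{-1}$ and the $O(X^{1/2+\varepsilon}\sqrt{n})$ error both checking out. One point of bookkeeping deserves correction: you use a single symbol $n_0$ both for the radical of $n$ (which governs the period of $e\mapsto\left(\tfrac{e}{n}\right)$ and the modulus in Poisson summation) and for the squarefree part $n_1$ of the factorization $n=n_1n_2^2$ (which is the conductor of the primitive character whose Gauss sum is $\sqrt{n_1}$, and whose triviality $n_1=1$ is the condition $n=\square$ for the zero frequency to survive); as written, ``$n_0=1$ iff $n$ is a perfect square'' is false for the radical. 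The complete sum modulo $8\,\mathrm{rad}(n)$ factors into a primitive Gauss sum modulo $n_1$ and a Ramanujan-type sum over the remaining prime factors, which still yields the claimed error after summing over $k$. Finally, no quadratic reciprocity is needed anywhere: $\left(\tfrac{a^2e}{n}\right)=\left(\tfrac{e}{n}\right)$ for $(a,n)=1$ is just multiplicativity of the Kronecker symbol in the upper entry, and $\left(\tfrac{e}{n}\right)$ is already periodic in $e$ without flipping the symbol.
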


\section{Soundararajan's resonance method}\label{sec:proofoftheorem}

\begin{proof}[Proof of Theorem \ref{thm1}]
We introduce $M=X^{\theta}$ with some $\theta< 1$.
Set $\mathcal{L}=\sqrt{\log M\log\log M}$.
Following Soundararajan's resonance method\cite{So08}, we choose $b(\cdot)$ to be a multiplicative function supported on square-free numbers with $b(p)=\frac{\mathcal{L}}{\sqrt{p}\log p}$ for $\mathcal{L}^2\leq p \leq \exp((\log \mathcal{L})^2)$ and $b(p)=0$ otherwise.
Let

\begin{equation*}\label{def:resonator}
  R(d)=\sum_{\substack{m\leq M \\ m \text{ odd}}}b(m)\chi_{d}(m).
\end{equation*}

In the following, we will omit the odd condition of the variables due to the definition of $b(\cdot)$.
Let

\begin{align*}
  M_1(R,X)
  &=\mathop{\sum\nolimits^\flat}\limits_{d\equiv 1\pmod 8}
  R(d)^2\Phi\bigg(\frac{d}{X}\bigg) \\
  &=\underset{\substack{m_1,m_2\leq M}}{\sum\sum}b(m_1)b(m_2)
 \mathop{\sum\nolimits^\flat}\limits_{d\equiv 1\pmod 8}
 \chi_d(m_1m_2)
  \Phi\bigg(\frac{d}{X}\bigg)
  =\mathcal{M}_1+\mathcal{R}_1,
\end{align*}
where, using Lemma\ref{lemma:poission}, $m\le M<\frac{X}{2}\le p$ and trivial estimates, and recall that the function $b(\cdot)$ is supported on square-free numbers, we have
\begin{equation*}
  \mathcal{M}_1=\sum_{m\leq M} b(m)^2 \mathop{\sum\nolimits^\flat}\limits_{d\equiv 1\pmod 8}\chi_{d}(m^2)
  \Phi\bigg(\frac{d}{X}\bigg)
  =  \check{\Phi}(0)\frac{X}{\pi^2}\sum_{m\leq M} b(m)^2\prod_{p\mid m}
    \bigg(1+\frac{1}{p}\bigg)^{-1} +
    O(X^{\frac{1}{2}+\varepsilon}M^2)
\end{equation*}
is the diaogonal part of it and
\begin{equation*}
  \mathcal{R}_1
  =\underset{\substack{m_1,m_2\leq M\\m_1\neq m_2}}{\sum\sum}b(m_1)b(m_2)
  \mathop{\sum\nolimits^\flat}\limits_{d\equiv 1\pmod 8}
 \chi_d(m_1m_2)\Phi\bigg(\frac{d}{X}\bigg)
 =O(X^{\frac{1}{2}+\varepsilon}M^3)
\end{equation*}
is the off-diaogonal part.
In Proposition \ref{prop:Mi} below, we estimate the size of $M_1$, so here we let $\theta < \frac{1}{6}$.

Let $p$ be denoted as a prime number,
and
\begin{align*}
  M_2(R,X)
  &=\sum_{p\equiv 1\pmod 8 }
  (\log p) L\bigg(\frac{1}{2},\chi_p\bigg)R(p)^2 \Phi\bigg(\frac{p}{X}\bigg)\\
  &=\frac{2}{(1-\frac{1}{\sqrt{2}})^{2}}
  \sum_{\substack{p\equiv 1\pmod 8 }}
  (\log p)\Phi\bigg(\frac{p}{X}\bigg)
  \sum_{\substack{n=1 \\ n \textrm{ odd }}}^\infty
  \underset{\substack{m_1,m_2\leq M}}{\sum\sum}b(m_1)b(m_2)
  \frac{\chi_p(nm_1m_2)}{\sqrt{n}}
  V\bigg(n\sqrt{\frac{\pi}{p}}\bigg) \\
  &=:\mathcal{M}_2+\mathcal{R}_2,
\end{align*}
where,
after removing the coprime restrictions for $m_1,m_2$ with respect to $p$ since $m_1,m_2\le M<\frac{X}{2}\le p$,

\begin{equation*}
\begin{split}
  &\mathcal{M}_2=\frac{2}{(1-\frac{1}{\sqrt{2}})^{2} }
  \sum_{p\equiv 1\pmod 8 }
  (\log p)\Phi\bigg(\frac{p}{X}\bigg)
  \frac{1}{2\pi i}
  \int_{(u)}
\frac{\Gamma(\frac{s}{2}+\frac{1}{4})}
{\Gamma(\frac{1}{4})}
\bigg(1-\frac{1}{2^{\frac{1}{2}-s}}\bigg)\\
&\hskip 30pt\times
  \underset{\substack{nm_1m_2=\square\\
  m_1,m_2\leq M\\(n,2p)=1}}{\sum\sum\sum}
\frac{b(m_1)b(m_2)}
{n^{\frac{1}{2}+s}}
\bigg(\frac{p}{\pi}\bigg)^{\frac{s}{2}}
e^{s^2}\frac{\dd s}{s}
\end{split}
\end{equation*}
and

\begin{equation*}
\begin{split}
  &\mathcal{R}_2=\frac{2}{(1-\frac{1}{\sqrt{2}})^{2} }
  \sum_{p\equiv 1\pmod 8 }
  (\log p)\Phi\bigg(\frac{p}{X}\bigg)
  \frac{1}{2\pi i}
  \int_{(u)}
\frac{\Gamma(\frac{s}{2}+\frac{1}{4})}
{\Gamma(\frac{1}{4})}
\bigg(1-\frac{1}{2^{\frac{1}{2}-s}}\bigg)\\
&\hskip 30pt\times
  \underset{\substack{nm_1m_2\neq \square\\
  m_1,m_2\leq M\\(n,2)=1}}{\sum\sum\sum}
\frac{b(m_1)b(m_2)}
{n^{\frac{1}{2}+s}}\chi_p(nm_1m_2)
\bigg(\frac{p}{\pi}\bigg)^{\frac{s}{2}}
e^{s^2}\frac{\dd s}{s}.
\end{split}
\end{equation*}

We have the following two propositions which will be proved in later sections.
\begin{proposition}\label{prop:Mi}
For $\theta< 1/6$,
  \begin{equation*}
  \mathcal{M}_1\ll X\prod_{\mathcal{L}^2\leq p \leq \exp((\log \mathcal{L})^2)}
(1+ b(p)^2)
\end{equation*}
and
\begin{equation*}
   |\mathcal{M}_2|\gg
   X\log X \prod_{\mathcal{L}^2\leq p \leq \exp((\log \mathcal{L})^2)}
  \bigg(1+2\frac{b(p)}{\sqrt{p}}+b(p)^2\bigg).
\end{equation*}
\end{proposition}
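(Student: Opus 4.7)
\medskip
\noindent\textbf{Plan.} The bound on $\mathcal{M}_1$ is essentially immediate. Since $b$ is multiplicative and supported on squarefree integers, so is $b(\cdot)^2$. Dropping the harmless factor $(1+1/p)^{-1}\leq 1$ and extending the $m$-sum to all positive integers gives an Euler product
\[
\sum_{m\leq M} b(m)^2\prod_{p\mid m}(1+1/p)^{-1}\;\leq\;\prod_{p}(1+b(p)^2)=\prod_{\mathcal L^2\leq p\leq \exp((\log\mathcal L)^2)}(1+b(p)^2).
\]
The remainder $O(X^{1/2+\varepsilon}M)=O(X^{1/2+\theta+\varepsilon})$ is absorbed into $X$ since $\theta<1/4$, so the estimate for $\mathcal{M}_1$ follows.

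\medskip
\noindent For $\mathcal{M}_2$ I would first reparametrize the inner triple sum. Writing $d=(m_1,m_2)$, $m_1=da$, $m_2=db$ with $(a,b)=1$, the condition $nm_1m_2=\square$ forces $nab=\square$, so $n=abk^2$ with $(k,p)=1$ and $(ab,p)=1$; here $d,a,b$ are pairwise coprime squarefree and $b(da)=b(d)b(a)$, $b(db)=b(d)b(b)$. The $k$-sum yields $\zeta(1+2s)(1-p^{-1-2s})\prod_{q\mid ab}(1-q^{-1-2s})$, so the $s$-integrand equals
\[
\frac{\Gamma(\tfrac{s}{2}+\tfrac14)}{\Gamma(\tfrac14)}\bigl(1-2^{s-1/2}\bigr)\zeta(1+2s)(1-p^{-1-2s})\,\mathcal{T}(s,p)\,(p/\pi)^{s/2}e^{s^2}\frac{1}{s},
\]
where $\mathcal{T}(s,p)$ is the multiplicative sum over $d,a,b$. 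I would then push the contour from $\Re s=u>0$ to $\Re s=-1/4+\varepsilon$; the rapid decay of $e^{s^2}$ makes the shifted integral negligible, and one picks up the double pole at $s=0$ coming from $\zeta(1+2s)/s$. The residue is a polynomial of degree one in $\log(p/\pi)$ with leading coefficient proportional to $\mathcal{T}(0,p)$, so summing over primes $X<p\leq 2X$, $p\equiv 1\pmod 8$ via the prime number theorem in arithmetic progressions contributes
\[
\sum_{\substack{X<p\leq 2X\\p\equiv 1\pmod 8}}(\log p)^2\Phi(p/X)\;\gg\;X\log X.
\]

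\medskip
\noindent It remains to identify $\mathcal{T}(0,p)$ with the Euler product in the statement. Removing the truncation $da,db\leq M$ and the (harmless) constraint $(ab,p)=1$, the sum over pairwise coprime squarefree triples $(d,a,b)$ factors over primes; the local factor at $q$ in the support of $b$ is
\[
1+b(q)^2+\frac{2b(q)}{\sqrt q}\Bigl(1-\frac{1}{q}\Bigr)\;=\;\bigl(1+2b(q)/\sqrt q+b(q)^2\bigr)\bigl(1+O(q^{-3/2}b(q))\bigr),
\]
which matches the stated product up to $\exp(o(1))$ since $q\geq \mathcal L^2$. The \emph{main obstacle} is showing that imposing $da,db\leq M$ costs only a $1+o(1)$ factor: this is the standard Rankin-trick concentration step in the resonance method. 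With $b(p)=\mathcal L/(\sqrt p\log p)$ supported on $[\mathcal L^2,\exp((\log\mathcal L)^2)]$, the ``typical'' $m$ in the sum has $\log m$ of size $\mathcal L^2(1+o(1))=\log M\cdot(1+o(1))$, so choosing a small $\sigma>0$ and multiplying the complementary range by $(da/M)^{2\sigma}(db/M)^{2\sigma}\geq 1$ yields a geometric loss $\exp(-c\sigma\mathcal L^2)$ against the full Euler product, which is negligible. Granting this, we obtain $\mathcal{T}(0,p)\gg \prod_{\mathcal L^2\leq q\leq \exp((\log\mathcal L)^2)}(1+2b(q)/\sqrt q+b(q)^2)$ uniformly in $p\asymp X$, and combining with the $X\log X$ factor above completes the proof.
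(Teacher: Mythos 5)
Your plan follows essentially the same route as the paper's proof: factor out the gcd (writing $m_1=da$, $m_2=db$ with $(a,b)=1$ so that $nm_1m_2=\square$ forces $n=abk^2$), shift the $s$-contour to the left to pick up the double pole of $\zeta(1+2s)/s$, and then remove the truncation $da,db\leq M$ by Rankin's trick. Two points need repair.

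First, a minor slip. Your $k$-sum formula carries an extraneous factor $\prod_{q\mid ab}(1-q^{-1-2s})$. With $ab$ squarefree and odd, the constraint $nab=\square$ gives $n=abk^2$ for \emph{every} odd $k$ coprime to $p$; there is no coprimality condition between $k$ and $ab$. The $k$-sum is simply $\zeta(1+2s)(1-2^{-1-2s})(1-p^{-1-2s})$, and the local factor for $\mathcal{T}(0,p)$ is $1+2b(q)/\sqrt q+b(q)^2$ on the nose. You correctly observe that the discrepancy is $\exp(o(1))$, so this does not damage the final bound, but the formula should be corrected.

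Second, and this is the genuine gap: the residue computation glosses over precisely the step where the hypothesis $\theta<1/4$ enters. You assert the residue is a degree-one polynomial in $\log(p/\pi)$ with leading coefficient proportional to $\mathcal{T}(0,p)$ and then pass directly to $\sum_p(\log p)^2\Phi(p/X)\gg X\log X$, as though the constant coefficient were lower order. It is not. The constant coefficient contains $\tfrac12\mathcal{T}'(0,p)$, and
\begin{equation*}
\mathcal{T}'(0,p)=-\sum_{d,a,b}\frac{b(d)^2\,b(a)\,b(b)}{\sqrt{ab}}\,\log(ab)
\end{equation*}
is \emph{negative} with magnitude as large as $\asymp 2\theta(\log X)\,\mathcal{T}(0,p)$, the \emph{same} order as the leading term $\tfrac14\mathcal{T}(0,p)\log p$. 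The residue is positive (and $\gg\log X$) only because $m_0=ab\leq M^2=X^{2\theta}$ with $\theta<1/4$ gives $\tfrac14\log(p/m_0^2)\geq\tfrac14(1-4\theta)\log X>0$. This is exactly what the paper does via its bound $\mathcal{I}_{p,m_0}\gg\log X$, invoking $m_0\leq M^2=X^{2\theta}$. Your argument never uses $\theta<1/4$ at this point, so the lower bound $\mathcal{M}_2\gg X\log X\prod(1+2b(p)/\sqrt p+b(p)^2)$ is not yet established; you must insert this verification.

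Finally, your Rankin-step sketch is qualitatively correct but be careful that $\sigma$ cannot be a fixed constant: with $b(p)^2\asymp\mathcal L^2/(p\log^2 p)$ and $p\geq\mathcal L^2$, a constant $\sigma$ would blow up the Euler product via $b(p)^2p^{2\sigma}$; you need $\sigma\to 0$ with $X$, as in the paper's choice $\alpha=(\log\mathcal L)^{-3}$.
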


\begin{proposition}\label{prop:R2}
For $\theta\leq \frac{2}{45}-3\varepsilon$,
\begin{equation*}
  \mathcal{R}_2=O(\mathcal{M}_2X^{-\varepsilon}).
  \end{equation*}
\end{proposition}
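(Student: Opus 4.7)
The plan is to bound the off-diagonal term by converting the inner prime sum into a sum of a Dirichlet character against $\Lambda$ via quadratic reciprocity, and then splitting the resulting character sum into three regimes according to the size of the conductor, following the Baluyot--Pratt strategy.

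First I would reduce the problem to a prime character-sum estimate. Shifting the $s$-contour to $\Re(s) = \varepsilon$, the rapid decay of $e^{s^2}$ lets me truncate to $|\Im s| \leq (\log X)^{\frac{1}{2}}$ with negligible error, while the bound $V(y) \ll y^{-A}$ truncates the range of $n$ to $n \leq X^{\frac{1}{2} + \varepsilon}$. Writing $k = n m_1 m_2$ and letting $k'$ be its squarefree kernel, we have $k' > 1$ since $k \neq \square$, and $k$ is odd because the resonator is supported on odd squarefree $m$ and the approximate functional equation is restricted to odd $n$. Because $p \equiv 1 \pmod 8$ makes all reciprocity signs trivial, $\chi_p(k) = (\frac{k'}{p}) = (\frac{p}{k'})$. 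Detecting the congruence $p \equiv 1 \pmod 8$ via orthogonality modulo $8$ then reduces the inner sum to $\sum_p (\log p) \psi(p) \Phi(p / X)$ for a nontrivial Dirichlet character $\psi$ of conductor dividing $8 k'$. Thus $\mathcal{R}_2$ is controlled by a weighted sum of such prime sums, with the conductor ranging up to roughly $X^{\frac{1}{2} + 2 \theta + \varepsilon}$.

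Next I would split the $k'$-range and apply a different tool in each piece. For $k' \leq (\log X)^A$, Page's theorem isolates at most one exceptional real character; Siegel's theorem bounds its contribution ineffectively and Siegel--Walfisz handles the remainder, giving $\ll X \exp(-c \sqrt{\log X})$. For $(\log X)^A < k' \leq X^{\alpha_0}$ with a small fixed $\alpha_0$, I would invoke a Linnik-type zero-density estimate for Dirichlet $L$-functions (in the style of Huxley or Heath-Brown) to obtain a power saving in $X$. For the remaining range $X^{\alpha_0} < k' \leq X^{\frac{1}{2} + 2 \theta + \varepsilon}$, I would apply Vaughan's identity to $\Lambda$ and bound the resulting Type I and Type II bilinear forms in the variables $(m_1, m_2, n)$ using Heath-Brown's large sieve for quadratic characters together with Cauchy--Schwarz.

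The main obstacle will be the Vaughan range. Because $b(p) \approx \mathcal{L} / (\sqrt{p} \log p)$ is comparatively large, the $\ell^1$-mass of the resonator is significantly larger than the $\ell^2$-mass that drives $\mathcal{M}_2 \gg X \log X \prod_p (1 + 2 b(p)/\sqrt{p} + b(p)^2)$; hence the bilinear estimate coming from the large sieve must be sharp enough that the savings from the character sum dominate this loss. The precise balance between the Type II bound, which depends on a power of $M = X^\theta$ and a power of $X$ multiplied by the square of the $\ell^1$-mass of $b$, and the size of the main term $\mathcal{M}_2$ forces the threshold $\theta \leq \frac{1}{19} - \varepsilon$. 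Once each range is summed and compared to $\mathcal{M}_2$, the desired estimate $\mathcal{R}_2 \ll \mathcal{M}_2 X^{-\varepsilon}$ follows.
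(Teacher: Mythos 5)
Your overall architecture matches the paper's: convert the inner prime sum to a twisted character sum via quadratic reciprocity and orthogonality modulo $8$, split according to the size of the conductor into three ranges, then use exceptional-zero theory, zero-density estimates, and Vaughan's identity (with the Baluyot--Pratt bilinear bounds) in the respective ranges. The boundaries you pick ($(\log X)^A$, $X^{\alpha_0}$) differ from the paper's ($\exp(\varpi\sqrt{\log X})$, $X^{2/15}$) but those are adjustable parameters, and the specific zero-density input (the paper uses Montgomery's estimate with exponent $\tfrac{5}{2}(1-\alpha)$, valid for $\alpha\geq 4/5$) is the sort of thing you describe as ``Linnik-type.''

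However, there is one genuine gap in the small-conductor range. You propose to handle the possible exceptional modulus by invoking ``Siegel's theorem,'' which ``bounds its contribution ineffectively.'' This cannot be what the paper does, because Theorem~\ref{thm1} explicitly asserts that the $o$-constant is \emph{effective}. The paper instead keeps the argument effective by combining two observations. First, when the exceptional modulus $k^*$ is small (say $|k^*/\gamma^*|<\mathcal{L}^2$), it uses the elementary, effective bound $\beta_1<1-c/(\sqrt{|k^*|}(\log|k^*|)^2)$ (Davenport, Ch.~14), which already yields $X^{\beta_1}\ll X\exp(-c(\log X)^{1/2-\varepsilon})$ because $\mathcal{L}^2\asymp\log X\log\log X$. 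Second, when $|k^*/\gamma^*|\geq\mathcal{L}^2$, the exceptional modulus must be divisible by a prime in the resonator's support $[\mathcal{L}^2,\exp((\log\mathcal{L})^2)]$ or outside it, and in either case the corresponding factor in the weighted sum over $(q,g,m_0,r)$ with $\gamma^*m_0r=k^*$ is small --- this is the estimate culminating in the extra $(\log\mathcal{L})^{-1}$ saving in the paper's inequality \eqref{eq: handle parts except for the integral 1}. Without exploiting this interaction between the resonator coefficients and the exceptional modulus, Page's theorem alone only tells you that $\beta_1>1-c/(2\varpi\sqrt{\log X})$ may occur, which is not enough, and falling back on Siegel's theorem would sacrifice effectivity. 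So your Regime~I needs to be replaced by the effective two-case argument.

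A smaller bookkeeping point: the paper does not directly work with the squarefree kernel of $nm_1m_2$. It first writes $n=rk^2$ with $r$ squarefree, then pulls out $q=\gcd(m_1,m_2)$ and $g=\gcd(m_0,r)$, so that the character ultimately has modulus $m_0r$ with $m_0\leq M^2/(gq^2)$ and $r\leq X^{1/2+\varepsilon}$, with coprimality conditions that make $\chi_{\gamma m_0r}$ primitive. If you only pass to ``the squarefree kernel of $nm_1m_2$'' you will need to account separately for how many triples $(n,m_1,m_2)$ produce the same kernel; the paper's decomposition handles that multiplicity precisely via the arithmetic functions $a_q(\cdot)$, $b(\cdot)$, and the divisor bound.
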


Choose $\theta= \frac{2}{45}-3\varepsilon$,
then we end the proof of Theorem \ref{thm1} by using
\begin{multline*}
  \max_{\substack{X< p\leq 2X\\p\equiv 1\pmod 8}}
  L(\frac{1}{2},\chi_p)\geq
\frac{M_2(R,X)}{ M_1(R,X)\log 2X}
\gg
\frac{\mathcal{M}_1}{|\mathcal{M}_2|\log X}
  \gg \prod_{\mathcal{L}^2\leq p \leq \exp((\log \mathcal{L})^2)}
  \bigg(1+2\frac{b(p)}{\sqrt{p}}\bigg)
 \\ \gg \exp\bigg(2\mathcal{L} \sum_{\mathcal{L}^2\leq p \leq \exp((\log \mathcal{L})^2)} \frac{1}{p\log p}\bigg)
 =\exp\Big((2\sqrt{\theta}+o(1))\frac{\sqrt{\log X}}{\sqrt{\log\log X}}\Big).
\end{multline*}
\end{proof}

\section{Estimation of diagonal terms}\label{proofofprop1}
\begin{proof}[Proof of Proposition \ref{prop:Mi}]
For $m_1,m_2$ square-free, we write $m_1m_2=m_0(m_1,m_2)^2$ so that $m_0$ is square-free.
Thus
\begin{equation*}
\begin{split}
  \mathcal{M}_2&=
   \frac{2}{(1-\frac{1}{\sqrt{2}})^{2}}
  \underset{m_1,m_2\leq M}{\sum\sum}
  b(m_1)b(m_2)
  \sum_{p\equiv 1\pmod 8 }
  (\log p)\Phi\bigg(\frac{p}{X}\bigg)
  \\
&\hskip 30pt\times
  \frac{1}{2\pi i}
  \int_{(u)}
\frac{\Gamma(\frac{s}{2}+\frac{1}{4})}
{\Gamma(\frac{1}{4})}
(1-2^{-\frac{1}{2}+s})
  \sum_{\substack{nm_0=\square\\(n,2p)=1}}
\frac{1}
{n^{\frac{1}{2}+s}}
\bigg(\frac{p}{\pi}\bigg)^{\frac{s}{2}}
e^{s^2}\frac{\dd s}{s}\\
&=
 \frac{2}{(1-\frac{1}{\sqrt{2}})^{2}}
 \underset{m_1,m_2\leq M}{\sum\sum}
  \frac{b(m_1)b(m_2)}{\sqrt{m_0}}
  \sum_{p\equiv 1\pmod 8 }
  (\log p)\Phi\bigg(\frac{p}{X}\bigg)
  \\
&\hskip 30pt\times
  \frac{1}{2\pi i}
  \int_{(u)}
\frac{\Gamma(\frac{s}{2}+\frac{1}{4})}
{\Gamma(\frac{1}{4})}
\zeta(1+2s)\bigg(\frac{p}{\pi m_0^2}\bigg)^{\frac{s}{2}}
(1-2^{-\frac{1}{2}+s})
(1-2^{-1-2s})
(1-p^{-1-2s})
e^{s^2}\frac{\dd s}{s},
\end{split}
\end{equation*}

Let $s=u+it$, and
\begin{equation*}
  \mathcal{I}_{p,m_0}=\frac{1}{2\pi i}
  \int_{(u)}
\frac{\Gamma(\frac{s}{2}+\frac{1}{4})}
{\Gamma(\frac{1}{4})}
\zeta(1+2s)
\bigg(\frac{p}{\pi m_0^2}\bigg)^{\frac{s}{2}}
(1-2^{-\frac{1}{2}+s})
(1-2^{-1-2s})
(1-p^{-1-2s})
e^{s^2}\frac{\dd s}{s},
\end{equation*}
we move the line of integration to $\Re(s)=-\frac{1}{2}+\varepsilon$,
leaving a residue at $s=0$,
the new integral is
\begin{equation*}
\begin{split}
  &\frac{1}{2 \pi^{3/4 + \varepsilon/2}}
  p^{-\frac{1}{4}+\frac{\varepsilon}{2}} m_0^{\frac{1}{2}-\varepsilon}\\
&\hskip 20pt\times
\int_{-\infty}^{+\infty}
\frac{\Gamma(\frac{\varepsilon+it}{2})}
{\Gamma(\frac{1}{4})}
\zeta(2\varepsilon+2it)
\bigg(\frac{p}{\pi m_0^2}\bigg)^{\frac{it}{2}}
(1-2^{-1+\varepsilon+it})
(1-2^{-2\varepsilon-2it})
(1-p^{-2\varepsilon-2it})
\frac{e^{(-\frac{1}{2}+\varepsilon+it)^2}}
{-\frac{1}{2}+\varepsilon+it}\dd t\\
&\ll p^{-\frac{1}{4}+\varepsilon}
m_0^{\frac{1}{2}}
\int_{-\infty}^{+\infty}
|\zeta(2\varepsilon+2it)|
\frac{e^{-t^2}}{1+|t|} \dd t\\
&\ll p^{-\frac{1}{4}+\varepsilon}
m_0^{\frac{1}{2}}.
\end{split}
\end{equation*}
We have the Laurent series expansions in a neighborhood of $s=0$
\begin{equation*}
\begin{gathered}
\frac{\Gamma(\frac{s}{2}+\frac{1}{4})}
{\Gamma(\frac{1}{4})}
=1+\frac{1}{2}\frac{\Gamma^{'}(\frac{1}{4})}
{\Gamma(\frac{1}{4})}s+\dots
=1-\bigg(\frac{\pi}{4}+\frac{3\log 2}{2} +\frac{\gamma}{2}\bigg)s+\dots;\\
\zeta(1+2s)=\frac{1}{2s}+\gamma+\dots;\\
\bigg(\frac{p}{\pi m_0^2}\bigg)
^{\frac{s}{2}}=1+\frac{1}{2}
\log\bigg(\frac{p}{\pi m_0^2}\bigg)s+\dots;\\
(1-2^{-\frac{1}{2}+s})
(1-2^{-1-2s})
(1-p^{-1-2s})
e^{s^2}
=2^{-1}(1-2^{-\frac{1}{2}})
(1-p^{-1})
\\ \times
\bigg(1+\bigg(2+2\log 2+\frac{2\log p}{p-1}-\frac{\log 2}{\sqrt{2}-1}\bigg)s+\dots\bigg);
\end{gathered}
\end{equation*}
where $\gamma$ is the Euler constant.
It follows that the residues can be written as
\begin{equation}\label{eq:residue}
  2^{-1}(1-2^{-\frac{1}{2}})
(1-p^{-1})\Big(\frac{1}{4}\log\bigg(\frac{p}{m_0^2}\bigg)
+\frac{\log p}{p-1}+ c_0
\Big)
\end{equation}
where fixed constant
\begin{equation*}
  c_0=-\bigg(\frac{\pi}{4}+\frac{3\log 2}{2} +\frac{\gamma}{2}\bigg)
-\frac{\log(\pi)}{4}
+\frac{3}{2}-\frac{\log 2}{2\sqrt{2}-2}+\gamma,
\end{equation*}
and $c_0\approx -1.15939623$ is easily known by hand or by MATLAB.
Then for $X$ large enough, $p\geq X$, we have
\begin{equation*}
  |\mathcal{I}_{p,m_0}|\geq
  2^{-1}(1-2^{-\frac{1}{2}})
(1-X^{-1})\bigg(\frac{1}{4}
\log\bigg(\frac{p}{m_0^2}\bigg)
-1.15939624\bigg)
\gg \log X
\end{equation*}
since $m_0\le M^2=X^{2\theta}$.

Therefore, we can get
\begin{equation}\label{M2 gg}
\begin{split}
  |\mathcal{M}_2|&\gg
 \underset{m_1,m_2\leq M}{\sum\sum}
  \frac{b(m_1)b(m_2)}{\sqrt{m_0}}
  \sum_{p\equiv 1\pmod 8 }
  (\log p)\Phi\bigg(\frac{p}{X}\bigg)\log X\\
  &\gg_\Phi X\log X
  \underset{m_1,m_2\leq M}{\sum\sum}
  \frac{b(m_1)b(m_2)}{\sqrt{m_0}},
  \end{split}
\end{equation}
by applying the prime number theorem in arithmetic progressions
\begin{equation*}
  \sum_{\substack{p\equiv 1\pmod 8 \\ p \textrm{ prime }}}
  (\log p)\Phi\bigg(\frac{p}{X}\bigg)
  =\frac{X}{4}\check{\Phi}(0)+O\bigg(\frac{X}{\log X}\bigg),
\end{equation*}
where $m_0=\frac{m_1m_2}{(m_1, m_2)^2}$.

Both $m_1,m_2$ in \eqref{M2 gg} are square-free, then we write $m_1=qr$, $m_2=qs$, $m_0=rs$ with $(q,r)=(q,s)=(r,s)=1$, we have
\begin{equation}\label{truncated sum}
    \underset{m_1,m_2\leq M}{\sum\sum}
  \frac{b(m_1)b(m_2)}{\sqrt{m_0}}
  =\underset{\substack{(q,r)=(q,s)=(r,s)=1
  \\qr,qs\leq M}}
  {\sum\sum\sum}
  \frac{b(qr)b(qs)}{\sqrt{rs}}.
\end{equation}
We remove the restriction of $qr,qs\leq M$, the extend sum is
\begin{equation}\label{complete sum}
\underset{\substack{(q,r)=(q,s)=(r,s)=1}}
  {\sum\sum\sum}
  \frac{b(qr)b(qs)}{\sqrt{rs}}
    =\prod_{\mathcal{L}^2\leq p \leq \exp((\log \mathcal{L})^2)}
  \bigg(1+2\frac{b(p)}{\sqrt{p}}+b(p)^2\bigg).
\end{equation}

By the symmetry of $qr$ and $qs$, the difference between \eqref{truncated sum} and \eqref{complete sum} could be controlled by
  \begin{equation*}
2\underset{\substack{(q,r)=(q,s)=(r,s)=1
  \\qr> M}}
  {\sum\sum\sum}
  \frac{b(q)^2b(r)b(s)}{\sqrt{rs}}.
\end{equation*}
By employing Rankin's trick, for any $\alpha>0$, the above is
\begin{equation}\label{eqn:Rankinerror}
\begin{split}
 &\ll M^{-\alpha}\underset{
 \substack{(q,r)=(q,s)=(r,s)=1}}
  {\sum\sum\sum}b(q)^2 b(r)b(s)
  q^{\alpha}r^{-\frac{1}{2}+\alpha}
s^{-\frac{1}{2}}\\
&\ll M^{-\alpha}
\prod_{\mathcal{L}^2\leq p \leq \exp((\log \mathcal{L})^2)}
\Big(1+b(p)^2p^{\alpha}
  +b(p)p^{-\frac{1}{2}}(1+p^{\alpha})\Big),
\end{split}
\end{equation}
Choose $\alpha=(\log \mathcal{L})^{-3}$, we see the ratio of the upper bound in \eqref{eqn:Rankinerror} to the right hand side in \eqref{complete sum} is
\begin{equation*}\label{eqn:ratioerror}
\begin{split}
&\ll \exp\Bigg(
  -\alpha\log M+\sum_{\mathcal{L}^2\leq p\leq \exp((\log \mathcal{L})^2)}\Big(
 b(p)^2(p^{\alpha}-1)
 +b(p)p^{-\frac{1}{2}}(p^{\alpha}-1)
  \Big)
  \Bigg)\\
  &\ll \exp\Bigg(
  -\alpha\log M+\sum_{\mathcal{L}^2\leq p\leq \exp((\log \mathcal{L})^2)}\bigg(
  \frac{\mathcal{L}}{p\log p}(\frac{\mathcal{L}}{\log p}+1)(p^\alpha-1)
  \bigg)
  \Bigg)\\
     & \ll \exp(-\alpha\frac{\log M}{\log\log M}).
\end{split}
\end{equation*}
Thus
\begin{equation*}
   |\mathcal{M}_2|\gg
   X\log X \prod_{\mathcal{L}^2\leq p \leq \exp((\log \mathcal{L})^2)}
  \bigg(1+2\frac{b(p)}{\sqrt{p}}+b(p)^2\bigg).
\end{equation*}

Similarly we have
\begin{equation*}
  \mathcal{M}_1\ll X\prod_{\mathcal{L}^2\leq p \leq \exp((\log \mathcal{L})^2)}
\bigg(1+\frac{p b(p)^2}{p+1}\bigg)\ll X\prod_{\mathcal{L}^2\leq p \leq \exp((\log \mathcal{L})^2)}
(1+ b(p)^2).
\end{equation*}
\end{proof}

\section{Estimation of off-diagonal terms}
\label{proofofprop2}
\begin{proof}[Proof of Proposition \ref{prop:R2}]
Recall that
\begin{align*}
    \mathcal{R}_2=\frac{2}{\left( 1-\frac{1}{\sqrt{2}}\right)^2}\mathop{\sum_{m_1\leq M} \sum_{m_2\leq M} \sum_{\substack{\textrm{odd }n=1 }}^{\infty}}_{m_1m_2n \neq \square} \frac{b(m_1)b(m_2)}{\sqrt{n}}   \sum_{p\equiv 1 \, (\text{mod }8) } (\log p) \Phi\left(\frac{p}{X}\right)V\left( n\sqrt{\frac{\pi}{p}}\right) \left(\frac{p}{m_1m_2n}\right).
\end{align*}
We can uniquely write $n = rk^2$, where $r$ is square-free and $k$ is an integer. It follows that
\begin{align*}
    \mathcal{R}_2=\frac{2}{\left( 1-\frac{1}{\sqrt{2}}\right)^2} &\sum_{m_1\leq M} \sum_{m_2\leq M} b(m_1)b(m_2) \sum_{\substack{\textrm{odd } r=1 \\ m_1m_2r \neq \square}}^{\infty} \sum_{\substack{k=1 \\ k \text{ odd}}}^{\infty} \frac{{\mu (r)}^2}{k \sqrt{r}} \\
    &\times\sum_{p\equiv 1 \, (\text{mod }8) } (\log p) \Phi\left(\frac{p}{X}\right)V\left( rk^2\sqrt{\frac{\pi}{p}}\right) \left(\frac{p}{m_1m_2rk^2}\right).
\end{align*}
We next factor out the greatest common divisor, say $q$, of $m_1$ and $m_2$, and change variables $m_1 \rightarrow qm_1, m_2 \rightarrow qm_2$ to obtain
\begin{align*}
    \mathcal{R}_2=\frac{2}{\left( 1-\frac{1}{\sqrt{2}}\right)^2} &\sum_{q\leq M} b(q)^2 \sum_{\substack{m_0\leq \frac{M^2}{q^2}\\ (m_0,q)=1}} a_q(m_0)b(m_0) \sum_{\substack{\textrm{odd } r=1 \\ m_0r \neq \square }}^{\infty} \sum_{\substack{k=1 \\ k \text{ odd}}}^{\infty} \frac{{\mu (r)}^2}{k \sqrt{r}} \\
    &\times\sum_{p\equiv 1 \, (\text{mod }8) } (\log p) \Phi\left(\frac{p}{X}\right)V\left( rk^2\sqrt{\frac{\pi}{p}}\right) \left(\frac{p}{m_0rq^2k^2}\right),
\end{align*}
where
\begin{align*}
a_q(m_0) = \#\{m_1 | m_0 : (m_1, m_0 / m_1) = 1,~\textrm{and}~m_1, m_0/m_1 \leq M/q~(\textrm{odd, square-free})\}\leq d(m_0).
\end{align*}
Similarly, we factor out the greatest common divisor, say $g$, of $m_0$ and $r$, and change variables $m_0 \rightarrow gm_0, r \rightarrow gr$, and from $m_0r\neq \square$ we have $m_0r>1$, thus
\begin{align*}
    \mathcal{R}_2=\frac{2}{\left( 1-\frac{1}{\sqrt{2}}\right)^2} &\sum_{q\leq M} b(q)^2 \sum_{\substack{g\leq \frac{M^2}{q^2}\\ (g,q)=1}} \frac{1}{\sqrt{g}} \sum_{\substack{m_0 \leq \frac{M^2}{gq^2} \\ (m_0,qg)=1}} a_q(m_0g)b(m_0g) \sum_{\substack{\textrm{odd } r=1 \\ (r,gm_0)=1 \\ m_0r>1}}^{\infty} \frac{{\mu (r)}^2}{ \sqrt{r}} \sum_{\substack{k=1 \\ k \text{ odd}}}^{\infty} \frac{1}{k} \\
    &\times\sum_{p\equiv 1 \, (\text{mod }8) } (\log p) \Phi\left(\frac{p}{X}\right)V\left( grk^2\sqrt{\frac{\pi}{p}}\right) \left(\frac{p}{m_0rg^2q^2k^2}\right),
\end{align*}
Then we will drop $\left(\frac{p}{g^2q^2k^2}\right)$.
Conditions $ p \nmid gq$ are automatically satisfied since $q \leq M < \frac{X}{2} \leq p $ and $g \leq M^2 < \frac{X}{2} \leq p $.
By using the decay property of $V(\cdot)$ in Lemma \ref{lemma:AFE}, and recall $|d(\cdot)b(\cdot)|\leq 1$, we truncate the sum over $k$ to $k \leq X^{\frac{1}{4}+\varepsilon}$ and the sum over $r$ to $r \leq X^{\frac{1}{2}+\varepsilon}$ at the cost of an error term
\begin{align*}
&X^{-3}\sum_{q\leq M} b(q)^2 \sum_{\substack{g\leq \frac{M^2}{q^2}\\ (g,q)=1}} \frac{b(g)}{ \sqrt{g}} \sum_{\substack{m_0 \leq \frac{M^2}{gq^2} \\ (m_0,qg)=1}} a_q(m_0g) b(m_0)\\
&\ll X^{-3}\sum_{q\leq M} b(q)^2 \sum_{\substack{g\leq \frac{M^2}{q^2}\\ (g,q)=1}} \frac{d(g)b(g)}{ \sqrt{g}} \sum_{m_0 \leq M^2} d(m_0)b(m_0)\\
&\ll X^{-3}M^{2}\prod_{\mathcal{L}^2\leq p \leq \exp((\log \mathcal{L})^2)}\bigg(1+2\frac{b(p)}{\sqrt{p}}+b(p)^2\bigg).
\end{align*}
Moreover, by quadratic reciprocity, we can get
\begin{align*}
    \mathcal{R}_2=\frac{2}{\left( 1-\frac{1}{\sqrt{2}}\right)^2} &\sum_{q\leq M} b(q)^2 \sum_{\substack{g\leq \frac{M^2}{q^2}\\ (g,q)=1}} \frac{b(g)}{ \sqrt{g}} \sum_{\substack{m_0 \leq \frac{M^2}{gq^2} \\ (m_0,qg)=1}} a_q(m_0g)b(m_0) \sum_{\substack{r \leq X^{\frac{1}{2}+\varepsilon } \\ (r,2gm_0)=1 \\ m_0r>1}} \frac{{\mu (r)}^2}{ \sqrt{r}} \sum_{\substack{k=1 \\ k \text{ odd}}}^{\infty} \frac{1}{k} \\
    &\times\sum_{p\equiv 1 \, (\text{mod }8) } (\log p) \Phi\left(\frac{p}{X}\right)V\left( grk^2\sqrt{\frac{\pi}{p}}\right) \left(\frac{m_0r}{p}\right) \\
    &+ O\bigg(X^{-1}\prod_{\mathcal{L}^2\leq p \leq \exp((\log \mathcal{L})^2)}\bigg(1+2\frac{b(p)}{\sqrt{p}}+b(p)^2\bigg)\bigg).
\end{align*}
By orthogonality for characters modulo 8, elementray arguments for moduli of characters(see\cite[Theorem 2.2.15]{Co07}) and inserting the definition of $V(\cdot)$, the $k$-summation turns into a zeta function, and we have
\begin{equation}\label{eq: pretreatment of R2 1}
  \begin{split}
  \mathcal{R}_2=& \frac{2}{\left( 1-\frac{1}{\sqrt{2}}\right)^2} \sum_{q\leq M} b(q)^2 \sum_{\substack{g\leq \frac{M^2}{q^2}\\ (g,q)=1}} \frac{{\mu (g)}^2b(g)}{ \sqrt{g}} \sum_{\substack{m_0 \leq \frac{M^2}{gq^2} \\ (m_0,qg)=1}} a_q(m_0g) b(m_0)\sum_{\substack{r \leq X^{\frac{1}{2}+\varepsilon } \\ r \text{ odd} \\ (r,gm_0)=1 \\ m_0r>1}}\frac{{\mu (r)}^2}{ \sqrt{r}}   \\
  &\times \frac{1}{4} \sum_{\gamma \in \{\pm 1, \pm 2\}} \frac{1}{2\pi i} \int_{(c)} \frac{\Gamma(\frac{s}{2}+\frac{1}{4})}{\Gamma(\frac{1}{4})} \left( 1-\frac{1}{2^{\frac{1}{2}-s}}\right)\left(1 - \frac{1}{2^{1+2s}}\right)\zeta(1+2s)\pi^{-s/2} \left( gr\right)^{-s} \\
  &\times \sum_{p } (\log p) \Phi\left(\frac{p}{X}\right)  \left(\frac{\gamma m_0r}{p}\right) p^{s/2} \, e^{s^2}\frac{\dd s}{s}+O\bigg(\prod_{\mathcal{L}^2\leq p \leq \exp((\log \mathcal{L})^2)}\bigg(1+2\frac{b(p)}
  {\sqrt{p}}+b(p)^2\bigg)\bigg) .
  \end{split}
\end{equation}
Let $\chi_{\gamma m_0r}(\cdot)=\left(\frac{\gamma m_0r}{\cdot }\right)$ if $\gamma m_0r \equiv 1$ (mod $4$), and $\chi_{\gamma m_0r}(\cdot)=\left(\frac{4\gamma m_0r}{\cdot }\right)$ if $\gamma m_0r\equiv 2$ or $3$ (mod $4$).
Then each $\chi_{\gamma m_0r}(\cdot)$ is a real primitive character for all the relevants $\gamma,m_0,r$.
Since $m_0r>1$, we see that $\gamma m_0r$ is never $1$, so each $\chi_{\gamma mr}$ is nonprincipal.
We choose $c=\frac{1}{\log X}$ so that $p^{\frac{s}{2}}$ is bounded absolutely.

It is more convenient to replace the $\log p$ factor with the von Mangoldt function $\Lambda(n)$. By trivial estimation we have
\begin{equation}\label{eq:extend to whole summation}
\sum_p(\log p) \Phi\left(\frac{p}{X}\right) \chi_{\gamma m_0 r}(p) p^{s / 2}=\sum_n \Lambda(n) \Phi\left(\frac{n}{X}\right) \chi_{\gamma m_0 r}(n) n^{s / 2}+O\left(X^{1 / 2}\right),
\end{equation}
since $p^{s/2}$ is bounded on $\Re(s) = 1 / \log X$.
Inserting this into \eqref{eq: pretreatment of R2 1}, we find the error term in \eqref{eq:extend to whole summation} costs
\begin{align*}
&X^{\frac{3}{4}+\varepsilon}\sum_{q\leq M} b(q)^2 \sum_{\substack{g\leq \frac{M^2}{q^2}\\ (g,q)=1}} \frac{b(g)}{ \sqrt{g}} \sum_{\substack{m_0 \leq \frac{M^2}{gq^2} \\ (m_0,qg)=1}} a_q(m_0g) b(m_0)\\
\ll&X^{\frac{3}{4}+\varepsilon}\sum_{q\leq M} b(q)^2 \sum_{\substack{g\leq \frac{M^2}{q^2}\\ (g,q)=1}} \frac{d(g)b(g)}{ \sqrt{g}} \sum_{m_0 \leq M^2} d(m_0)b(m_0)\\
\ll&X^{\frac{3}{4}+\varepsilon}M^2\prod_{\mathcal{L}^2\leq p \leq \exp((\log \mathcal{L})^2)}\bigg(1+2\frac{b(p)}{\sqrt{p}}+b(p)^2\bigg),
\end{align*}
which is acceptable when $M\ll X^{1/8-\varepsilon}$.
By the rapid decay of $|\Gamma(\frac{s}{2}+\frac{1}{4})\zeta(1+2s)
e^{s^2}|$ in vertical strips, and
$$|\sum_n \Lambda(n) \Phi\left(\frac{n}{X}\right) \chi_{\gamma m_0 r}(n) n^{s / 2}|
\ll X,$$
we can truncate the integral to $|\Im(s)| \leq (\log X)^2$, at the cost of a negligible error term $O(X^{-2023})$.
Thus
\begin{equation}\label{eq: pretreatment of R2 2}
  \begin{split}
  \mathcal{R}_2=
  &\frac{1}{2\left( 1-\frac{1}{\sqrt{2}}\right)^2} \sum_{q\leq M} b(q)^2 \sum_{\substack{g\leq \frac{M^2}{q^2}\\ (g,q)=1}} \frac{{\mu (g)}^2b(g)}{ \sqrt{g}} \sum_{\substack{m_0 \leq \frac{M^2}{gq^2} \\ (m_0,qg)=1}} a_q(m_0g)b(m_0) \sum_{\substack{r \leq X^{\frac{1}{2}+\varepsilon } \\ (r,2gm_0)=1 \\ m_0r>1}}
   \frac{{\mu (r)}^2}{ \sqrt{r}}   \\
  &\times \sum_{\gamma \in \{\pm 1, \pm 2\}} \frac{1}{2\pi i} \int_{\frac{1}{\log X}-i(\log X)^2}^{\frac{1}{\log X}+i(\log X)^2} \frac{\Gamma(\frac{s}{2}+\frac{1}{4})}{\Gamma(\frac{1}{4})} \left( 1-\frac{1}{2^{\frac{1}{2}-s}}\right)\left(1 - \frac{1}{2^{1+2s}}\right)\zeta(1+2s)\left(\frac{X}{\pi}\right)^{\frac{s}{2}}e^{s^2} \left( gr\right)^{-s} \\
  &\times \sum_{n} \Lambda (n) \Phi_s\left(\frac{n}{X}\right)  \chi_{\gamma m_0 r}(n) \frac{\dd s}{s}
  +O\bigg(X^{1-\varepsilon }\prod_{\mathcal{L}^2\leq p \leq \exp((\log \mathcal{L})^2)}\bigg(1+2\frac{b(p)}
  {\sqrt{p}}+b(p)^2\bigg)\bigg),
  \end{split}
\end{equation}
where $\Phi_s$ is defined by \eqref{def:checkPhi}.

Up to now, we have accomplished the pretreatment of $\mathcal{R}_2$.
We keep on showing $\mathcal{R}_2$ is small enough by dividing the interval into three segments and call these ranges Regimes \uppercase\expandafter{\romannumeral1}, \uppercase\expandafter{\romannumeral2} and \uppercase\expandafter{\romannumeral3}, which correspond to small, medium, and large values of $m_0r$, consistent with that in \cite{BP22}.
Regime \uppercase\expandafter{\romannumeral1} corresponds to
\begin{align*}
  1< m_0r \ll \exp(\varpi \sqrt{\log X}),
\end{align*}
where $\varpi>0$ is a sufficiently small constant.
Regime \uppercase\expandafter{\romannumeral2} corresponds to
\begin{align*}
  \exp(\varpi \sqrt{\log X}) \ll  m_0r \ll X^{\frac{2}{15}},
\end{align*}
and Regime \uppercase\expandafter{\romannumeral3} corresponds to
\begin{align*}
  X^{\frac{2}{15}} \ll  m_0r \ll MX^{1/2+\varepsilon}.
\end{align*}
We then write
\begin{equation}\label{Regime}
  \mathcal{R}_2=\mathcal{E}_1 + \mathcal{E}_2 + \mathcal{E}_3 + O\bigg(X^{1-\varepsilon }\prod_{\mathcal{L}^2\leq p \leq \exp((\log \mathcal{L})^2)}\bigg(1+2\frac{b(p)}
  {\sqrt{p}}+b(p)^2\bigg)\bigg).
\end{equation}

\subsection{Evaluation of $\mathcal{E}_1$}

We first bound $\mathcal{E}_1$, which is precisely the contribution of Regime I. We have
\begin{equation}\label{eq: Reg1 defn of E1}
  \begin{split}
  \mathcal{E}_1=&\frac{2}{\left( 1-\frac{1}{\sqrt{2}}\right)^2} \sum_{q\leq M} b(q)^2 \sum_{\substack{g\leq \frac{M^2}{q^2}\\ (g,q)=1}} \frac{{\mu (g)}^2b(g)}{ \sqrt{g}} \sum_{\substack{m_0 \leq \frac{M^2}{gq^2} \\ (m_0,qg)=1}} a_q(m_0g)b(m_0) \sum_{\substack{r \leq X^{\frac{1}{2}+\varepsilon } \\ (r,2gm_0)=1 \\ 1<m_0r\ll \exp(\varpi \sqrt{\log x}) \\ }}
   \frac{{\mu (r)}^2}{ \sqrt{r}}   \\
  &\times \frac{1}{4} \sum_{\gamma \in \{\pm 1, \pm 2\}} \frac{1}{2\pi i} \int_{\frac{1}{\log X}-i(\log X)^2}^{\frac{1}{\log X}+i(\log X)^2} \frac{\Gamma(\frac{s}{2}+\frac{1}{4})}{\Gamma(\frac{1}{4})} \left( 1-\frac{1}{2^{\frac{1}{2}-s}}\right)\left(1 - \frac{1}{2^{1+2s}}\right)\zeta(1+2s)\left(\frac{X}{\pi}\right)^{\frac{s}{2}}e^{s^2} \left( gr\right)^{-s} \\
  &\times \sum_{n} \Lambda (n) \Phi_s\left(\frac{n}{X}\right)  \chi_{\gamma m_0 r}(n) \frac{\dd s}{s}.
  \end{split}
\end{equation}
By partial summation formula and the definition of $\Phi_s \left( X\right)$,
\begin{equation}\label{eq:  partial summation before pnt}
  \begin{split}
  \sum_n \Lambda(n) \Phi_s \left( \frac{n}{X}\right) \chi_{\gamma m_0r}(n)
  = -\int_0^{\infty} \frac{1}{X}\Phi_s'\left(\frac{w}{X}\right) \Bigg(\sum_{n\leq w}\Lambda(n) \chi_{\gamma m_0r}(n)\Bigg) \,\dd w.
  \end{split}
\end{equation}
As a well-known result, see \cite[equation (8) of Chapter 20]{Da00}, we have
\begin{align}\label{eq:  davenport pnt}
  \sum_{n\leq w}\Lambda(n) \chi_{\gamma m_0r}(n) = -\frac{w^{\beta_1}}{\beta_1} +O\left(w \exp(-c_2 \sqrt{\log w}) \right),
  \end{align}
  where the term $-w^{\beta_1}/\beta_1$ only appears if $L(s,\chi_{\gamma m_0r})$ has a real zero $\beta_1$ which satisfies $\beta_1 > 1- \frac{c_1}{\log |\gamma m_0r|}$ for some sufficiently small constant $c_1>0$, and $c_2>0$ is a absolute constant depends on $c_1$.
  All the constants in \eqref{eq:  davenport pnt}, implied or otherwise, are effectively.
  Finally we can control the contribution of this possible zero point.

The contribution from the error term in \eqref{eq:  davenport pnt} is easy to control, since
  \begin{align}\label{eq:  bound on phi ' s}
    \int_0^{\infty } \frac{1}{X}\left| \Phi_s'\left( \frac{w}{X}\right)\right|\,\dd w \ = \int_0^{\infty} |\Phi_s'(u)|\,\dd u \ \ll \ |s|+\log X
    \end{align}
  uniformly in s with Re($s$)$=\frac{1}{\log X}$ is bounded.
From Titchmarsh's book \cite[Theorem 3.5 and (3.11.8)]{Ti86}, we have $\zeta (1+z) \ll \log \vert $Im($z$)$\vert $ for Re($z$)$\geqslant -c/\log \vert$Im($z$)$\vert $ and $\vert$Im($z$)$\vert $$\geqslant 1$. This yields
\begin{equation}\label{eq: bound on other integral terems}
  \begin{split}
  \frac{\Gamma(\frac{s}{2}+\frac{1}{4})}
  {\Gamma(\frac{1}{4})}\zeta(1+2s) \ll \log \log X,
       \\
\left( 1-\frac{1}{2^{\frac{1}{2}-s}}\right)\left(1 - \frac{1}{2^{1+2s}}\right)
\left(\frac{X}{\pi}\right)
^{\frac{s}{2}}e^{s^2} \left( gr\right)^{-s} \ll 1 .
  \end{split}
\end{equation}
To bound the contribution of error term of  \eqref{eq:  davenport pnt}, we essentially need to show the following
\begin{equation*}\label{eq: sum on q,g,m0 ordinary zeros}
  \begin{split}
    \sum_{q}b(q)^2\sum_{(g, q)=1}\frac{b(g)d(g)}{\sqrt{g}}\sum_{m_0 \ll exp(\varpi \sqrt{\log X})}b(m_0)d(m_0)
  \end{split}
\end{equation*}
is small enough.
By elementary estimate, inserting the definition of $b(\cdot)$, we have
\begin{equation}\label{eq:sum on m0 ordinary zeros}
  \begin{split}
    \sum_{m_0 \ll exp(\varpi \sqrt{\log X})}b(m_0)d(m_0) \ll \sqrt{\log X}\exp(\varpi \sqrt{\log X}).
  \end{split}
\end{equation}
Also, we have
\begin{align}\label{eq:sum on q,g ordinary zeros}
  \sum_{q}b(q)^2\sum_{(g, q)=1}\frac{b(g)d(g)}{\sqrt{g}} \leq \prod_{\mathcal{L}^2\leq p \leq \exp((\log \mathcal{L})^2)}\bigg(1+2\frac{b(p)}{\sqrt{p}}+b(p)^2\bigg).
\end{align}

Taking \eqref{eq: Reg1 defn of E1}, \eqref{eq:  partial summation before pnt}, \eqref{eq: bound on phi ' s}, \eqref{eq: bound on other integral terems}, \eqref{eq:sum on m0 ordinary zeros} and \eqref{eq:sum on q,g ordinary zeros} together,
we see the error term of \eqref{eq:  davenport pnt} contributes
\begin{equation*}\label{eq: regime I ordinary zeros}
  \ll X \exp((c_3\varpi-c_2)\sqrt{\log X})\prod_{\mathcal{L}^2\leq p \leq \exp((\log \mathcal{L})^2)}\bigg(1+2\frac{b(p)}{\sqrt{p}}+b(p)^2\bigg)
\end{equation*}
to $\mathcal{E}_1$, where $c_3>0$ is a absolute constant; we have used the supposition on Regime I that $m_0r \ll $exp($\varpi \sqrt{\log X}$). Choose $\varpi$ small enough and fix it, we can write $c_4=c_2-c_3\varpi >0$ is a constant.

Recall that the conductor of the real primitive character $\chi_{\gamma m_0r}$ is $\ll \exp(\varpi \sqrt{\log X}) \leq \exp(2\varpi \sqrt{\log X})$. We apply Page's theorem \cite[equation (9) of Chapter 14]{Da00}, which implies that, for some fixed absolute constant $c_5>0$, there is at most one real primitive character $\chi_{\gamma m_0r}$ with modulus $\leq \exp(2\varpi \sqrt{\log X})$ for which $L(s,\chi_{\gamma m_0r})$ has a real zero satisfying
\begin{align}\label{eq:  lower bound for exceptional beta 1}
\beta_1 > 1 - \frac{c_5}{2\varpi \sqrt{\log X}}.
\end{align}

To estimate the contribution of the possible term $ -\frac{w^{\beta_1}}{\beta_1}$, we evaluate the integral
\begin{align*}
\int_0^{\infty} \frac{w^{\beta_1}}{\beta_1}
\frac{1}{X}\Phi_s'\left(\frac{w}{X}\right) \,\dd w
\end{align*}
arising from \eqref{eq: partial summation before pnt} and \eqref{eq:  davenport pnt}. By changing variable $\frac{w}{X}\mapsto u$ and integrating by parts, we can see that this integral equals
\begin{align*}
 X^{\beta_1}\int_0^{\infty} \frac{u^{\beta_1}}{\beta_1}\Phi_s'(u) \,\dd u \ = \ -X^{\beta_1}\int_0^{\infty}\Phi_s(u)
 u^{\beta_1-1}\,\dd u \ = \ -X^{\beta_1} \check\Phi\left( \frac{s}{2}+\beta_1-1\right),
\end{align*}
where $\check\Phi(\cdot)$ is defined in \eqref{def:checkPhi} and $\check\Phi\left( \frac{s}{2}+\beta_1-1\right)=O(1)$
for bounded $\Re s$.

We assume that a real zero satisfying \eqref{eq:  lower bound for exceptional beta 1} does exist, for otherwise we already have an acceptable bound for $\mathcal{E}_1$. Let $k^*$ be such that $\chi_{k^*}$ is the exceptional character with a real zero $\beta_1$ satisfying \eqref{eq:  lower bound for exceptional beta 1}. Then we have
\begin{equation}\label{eq: handle siegel zeros 1}
  \begin{split}
  \mathcal{E}_1=&-\frac{2}{\left( 1-\frac{1}{\sqrt{2}}\right)^2}\frac{1}{2\pi i}\frac{1}{4} X^{\beta_1} \sum_{q\leq M} b(q)^2 \sum_{\substack{ g\leq \frac{M^2}{q^2}\\ (g,q)=1}} \frac{b(g) }{ \sqrt{g}} \sum_{\substack{ m_0 \leq \frac{M^2}{gq^2} \\ (m_0,qg)=1}} a_q(m_0g)b(m_0)  \\
  &\times \sum_{\substack{r \leq X^{\frac{1}{2}+\varepsilon } \\ (r,2gm_0)=1 \\ m_0r\ll \exp(\varpi \sqrt{\log x}) \\ \gamma^*m_0r=k^*}} \frac{\mu (r)^2}{\sqrt{r}} \int_{\frac{1}{\log X}-i(\log X)^2}^{\frac{1}{\log X}+i(\log X)^2} \frac{\Gamma(\frac{s}{2}+\frac{1}{4})}{\Gamma(\frac{1}{4})} \left( 1-\frac{1}{2^{\frac{1}{2}-s}}\right)\left(1 - \frac{1}{2^{1+2s}}\right) \zeta(1+2s)\left(\frac{X}{\pi}\right)^{\frac{s}{2}}   \\
  &\times e^{s^2}\left( gr\right)^{-s} \check\Phi\left( \frac{s}{2}+\beta_1-1\right) \frac{\dd s}{s}+ O\left(X \exp(-c_4\sqrt{\log X})\prod_{\mathcal{L}^2\leq p \leq \exp((\log \mathcal{L})^2)}\bigg(1+2\frac{b(p)}
  {\sqrt{p}}+b(p)^2\bigg)\right),
  \end{split}
\end{equation}
where $\gamma^* \in \{ \pm 1, \pm 2\} $ is fixed. In fact, there is only one choice of $\gamma$ that can give rise to the exceptional character since $m_0r$ is odd and positive. So the notation $\gamma^*$ makes sense.

We handle the $s$-integral in \eqref{eq: handle siegel zeros 1} by moving the line of integration to $\text{Re}(s) = -\frac{c_5}{\log \log X}$, where $c_5>0$ is small enough that  $\zeta (1+z) \ll \log \vert $Im($z$)$\vert $ for Re($z$)$\geqslant -c_5/\log \vert$Im($z$)$\vert $ and $\vert$Im($z$)$\vert $$\geqslant 1$ (see Titchmarsh \cite[Theorem 3.5 and Eq. (3.11.8)]{Ti86}).
We estimate the integral on the line $\text{Im}(s)=(\log X)^2$ with trivial estimates as follow
\begin{equation*}
  \begin{split}
  &\frac{\Gamma(\frac{s}{2}+\frac{1}{4})}{\Gamma(\frac{1}{4})}\zeta(1+2s) \ll \log \log X,\\
  &\left( 1-\frac{1}{2^{\frac{1}{2}-s}}\right)\left(1 - \frac{1}{2^{1+2s}}\right)\left(\frac{X}{\pi}\right)^{\frac{s}{2}}e^{s^2} \left( gr\right)^{-s} \ll 1 ,
  \end{split}
\end{equation*}
and so does $\text{Im}(s)=-(\log X)^2$.
We also estimate the integral on the line $\Re(s)=-\frac{c_5}{\log\log X}$ trivially, yields
\begin{equation*}
  \begin{split}
  &\frac{\Gamma(\frac{s}{2}+\frac{1}{4})}{\Gamma(\frac{1}{4})}\zeta(1+2s)X^{s/2} \ll \log \log X\exp\bigg(-c_5/2 \frac{\log X}{\log\log X}\bigg)  \\
  &\left( 1-\frac{1}{2^{\frac{1}{2}-s}}\right)\left(1 - \frac{1}{2^{1+2s}}\right)\left(\frac{1}{\pi}\right)^{\frac{s}{2}}e^{s^2} \left( gr\right)^{-s} \ll 1 .
  \end{split}
\end{equation*}
After moving the line of integration, a residue appears which comes from the pole at $s=0$.
Therefore,
\begin{equation}\label{eq: move the line of integration}
  \begin{split}
    \int_{\frac{1}{\log X}-i(\log X)^2}^{\frac{1}{\log X}+i(\log X)^2} =2\pi i \text{Res}_{s=1}(\cdot) +O\bigg(\exp(-c_6 \frac{\log X}{\log\log X})\bigg) ,
  \end{split}
\end{equation}
where $0<c_6<c_5/2$ is a constant and the residue is bounded by $O(\log X)$ as in equation \eqref{eq:residue}.

Next, let's treat the essential part of this multiple sum by dividing the range of $\frac{k^*}{\gamma^* }$ into two segments. If $\frac{k*}{\gamma* }\geq \mathcal{L}^2 $, then
\begin{equation}\label{eq: handle parts except for the integral 1}
  \begin{split}
  &\ \sum_{q}b(q)^2\sum_{(g, q)=1}\frac{b(g)d(g)}
  {\sqrt{g}}\sum_{\substack{m_0, r\\ m_0r \ll exp(\varpi \sqrt{\log X})\\ \gamma^*m_0r=k^* }} \frac{b(m_0)d(m_0)\mu(r)^2}{\sqrt{r}} \\
  &\ll  \prod_{\substack{p'<\mathcal{L}^2 \  or \ p'>\exp((\log \mathcal{L} )^2) \\ p' | \frac{k^*}{\gamma^* } }}\frac{1}{\sqrt{p'}} \prod_{\substack{\mathcal{L}^2 \leq p'' \leq \exp((\log \mathcal{L} )^2)\\ p'' | \frac{k^*}{\gamma^* }}} \bigg(\frac{2}{\log p''}+\frac{1}{\sqrt{p''}}\bigg) \prod_{\mathcal{L}^2\leq p \leq \exp((\log \mathcal{L})^2)}\bigg(1+2\frac{b(p)}{\sqrt{p}}+b(p)^2\bigg) \\
  &\ll (\log \mathcal{L} )^{-1} \prod_{\mathcal{L}^2\leq p \leq \exp((\log \mathcal{L})^2)}\bigg(1+2\frac{b(p)}{\sqrt{p}}+b(p)^2\bigg).
  \end{split}
\end{equation}
If $\frac{k^*}{\gamma^* } < \mathcal{L}^2 $,
\begin{equation}\label{eq: handle parts except for the integral 2}
  \begin{split}
  &\ \sum_{q}b(q)^2\sum_{(g, q)=1}\frac{b(g)d(g)}{\sqrt{g}}
  \sum_{\substack{m_0, r\\ m_0r \ll exp(\varpi \sqrt{\log X})\\ \gamma^*m_0r=k^* }} \frac{b(m_0)d(m_0)\mu(r)^2}{\sqrt{r}} \\
  &\ll  \prod_{\substack{p'<\mathcal{L}^2  \\ p' | \frac{k^*}{\gamma^* } }}\frac{1}{\sqrt{p'}} \prod_{\mathcal{L}^2\leq p \leq \exp((\log \mathcal{L})^2)}\bigg(1+2\frac{b(p)}{\sqrt{p}}+b(p)^2\bigg)\\
  &\ll \prod_{\mathcal{L}^2\leq p \leq \exp((\log \mathcal{L})^2)}\bigg(1+2\frac{b(p)}{\sqrt{p}}+b(p)^2\bigg).
  \end{split}
\end{equation}
Up to now, inserting \eqref{eq: move the line of integration}, \eqref{eq: handle parts except for the integral 1} and \eqref{eq: handle parts except for the integral 2} into \eqref{eq: handle siegel zeros 1}, we have
\begin{equation*}
  \begin{split}
\mathcal{E}_1 \ll &\left(X^{\beta_1}\log X \left(1+(\log \mathcal{L})^{-1} \right)  + X \exp(-c_4\sqrt{\log X})\right) \\
&\times \prod_{\mathcal{L}^2\leq p \leq \exp((\log \mathcal{L})^2)}\bigg(1+2\frac{b(p)}{\sqrt{p}}+b(p)^2\bigg).
  \end{split}
\end{equation*}
Now recall that we have the bound
\begin{align}\label{eq: upper bound for beta 1}
  \beta_1 < 1 - \frac{c_7}{\sqrt{|k^*|} (\log |k^*|)^2},
\end{align}
where $c_7 > 0$ is a absolute constant (see \cite[equation (12) of Chapter 14]{Da00}).
If $\frac{k^*}{\gamma^*}$ satisfies $|\frac{k^*}{\gamma^*}| <  \mathcal{L}^2  $ then by \eqref{eq:  upper bound for beta 1} we derive
\begin{align}\label{eq: when modulus small}
  X^{\beta_1} \ll X \exp(-c_8 (\log X)^{\frac{1}{2}-\varepsilon}),
  \end{align}
where $c_8>0$ is a absolute constant.
And, inserting \eqref{eq: move the line of integration}, \eqref{eq: handle parts except for the integral 2} and \eqref{eq: when modulus small} into \eqref{eq: handle siegel zeros 1}, by trivial estimates we obtain
\begin{align*}
  \mathcal{E}_1 \ll  X \exp(-c_8 (\log X)^{1/4})\prod_{\mathcal{L}^2\leq p \leq \exp((\log \mathcal{L})^2)}\bigg(1+2\frac{b(p)}{\sqrt{p}}+b(p)^2\bigg).
\end{align*}
If $\frac{k^*}{\gamma^*} \geq \mathcal{L}^2 $, inserting \eqref{eq:  lower bound for exceptional beta 1}, \eqref{eq: move the line of integration} and \eqref{eq: handle parts except for the integral 1} into \eqref{eq: handle siegel zeros 1}, we deduce by trivial estimation
\begin{align*}
  \mathcal{E}_1 \ll X(\log X)^{-2023}\prod_{\mathcal{L}^2\leq p \leq \exp((\log \mathcal{L})^2)}\bigg(1+2\frac{b(p)}{\sqrt{p}}+b(p)^2\bigg).
\end{align*}
This completes the proof of the bound for $\mathcal{E}_1$.

\subsection{Evaluation of $\mathcal{E}_2$}
Now we are going to bound $\mathcal{E}_2$ in \eqref{Regime}, which is the contribution of Regime \uppercase\expandafter{\romannumeral2}. Comparing with \eqref{eq: pretreatment of R2 2}, we see that we may write $\mathcal{E}_2$ as
\begin{equation*}
\begin{split}
&\frac{2}{\left( 1-\frac{1}{\sqrt{2}}\right)^2}
\sum_{q\leq M}{b(q)}^2\frac{1}{2 \pi i} \int_{\frac{1}{\log X}-i(\log X)^2}^{\frac{1}{\log X}+i(\log X)^2} K(s) e^{s^2}\sum_{\substack{g \leq \frac{M^2}{q^2} \\
(g, 2q)=1}} \frac{b(g)}{g^{1/2+s}} \sum_{\substack{m_0 \leq \frac{M^2}{q^2g}\\
(m_0, q g)=1}} a_q(m_0g)b(m_0) \\
&\times \sum_{\substack{r \leq X ^{1 / 2+\varepsilon} \\
(r, 2 gm_0)=1\\
  \exp(\varpi \sqrt{\log X}) \ll  m_0r \ll X^{\frac{2}{15}}}} \frac{\mu(r)^2}{r^{1 / 2+s}} \sum_{\gamma \in\{ \pm 1, \pm 2\}} \sum_n \Lambda(n) \Phi_s(n / X) \chi_{\gamma m_0 r}(n) \dd  s,
\end{split}
\end{equation*}
where
\begin{equation*}
K(s) = \frac{\Gamma(\frac{s}{2}+\frac{1}{4})}{\Gamma(\frac{1}{4})} \left( 1-\frac{1}{2^{\frac{1}{2}-s}}\right)\left(1 - \frac{1}{2^{1+2s}}\right)\zeta(1+2s)\left(\frac{X}{\pi}\right)^{\frac{s}{2}}.
\end{equation*}
Similar to \eqref{eq: bound on other integral terems},we have $K(s)\ll \log \log X \ll \log X $. We apply the triangle inequality and take a supremun in $s$ to see that, for some complex number $s_0$ satisfying $\Re(s_0)=1/\log X$, $|\Im(s_0)|\leq (\log X)^2$ we have
\begin{align}\label{Reg2ineq1}
\mathcal{E}_2 &\ll
(\log X)^3\sum_{q\leq M}{b(q)}^2
\sum_{\substack{g \leq \frac{M^2}{q^2} \\
(g, 2q)=1}} \frac{b(g)d(g)}{g^{1/2}}
\sum_{\substack{m_0 \leq \frac{M^2}{q^2g}\\
(m_0, q g)=1}} d(m_0)b(m_0) \nonumber\\
&\times \sum_{\substack{r \leq X ^{1 / 2+\varepsilon} \\
(r, 2 gm_0)=1 \\
\exp(\varpi \sqrt{\log X}) \ll  m_0r \ll X^{\frac{2}{15}}}} \frac{\mu(r)^2}{r^{1 / 2}} \sum_{\gamma \in\{ \pm 1, \pm 2\}}\Big |\sum_n \Lambda(n) \Phi_{s_0}(n / X) \chi_{\gamma m_0 r}(n)\Big|\nonumber\\
&\ll (\log X)^3 \prod_{p} \Big(1+2\frac{b(p)}{\sqrt{p}}+b(p)^2\Big)
\sum_{m_0 \leq M^2} d(m_0)b(m_0)\nonumber\\
&\times \sum_{\substack{r \leq X ^{1 / 2+\varepsilon} \\
(r,2m_0)=1\\
\exp(\varpi \sqrt{\log X}) \ll  m_0r \ll X^{\frac{2}{15}}}} \frac{\mu(r)^2}{r^{1 / 2}} \sum_{\gamma \in\{ \pm 1, \pm 2\}}\Big |\sum_n \Lambda(n) \Phi_{s_0}(n / X) \chi_{\gamma m_0 r}(n)\Big|.
\end{align}
For the first inequality we use $a_q(m_0g)\leq d(m_0g)\leq d(m_0)d(g)$. Now we write $a=m_0r$ and extend sums in (\ref{Reg2ineq1}) properly, since each of terms is non-nagative, we have
\begin{align}\label{Reg2ineq2}
\mathcal{E}_2&\ll
(\log X)^3 \prod_{p} \Big(1+2\frac{b(p)}{\sqrt{p}}+b(p)^2\Big)\nonumber \\
&\times \sum_{\exp(\varpi \sqrt{\log X}) \ll  a \ll X^{\frac{2}{15}}}\frac{\sum_{\substack{m_0\mid a}}d(m_0)b(m_0)\sqrt{m_0}}{a^{1/2}}\sum_{\gamma \in\{ \pm 1, \pm 2\}}\Big |\sum_n \Lambda(n) \Phi_{s_0}(n / X) \chi_{\gamma a}(n)\Big|.
\end{align}
Obviusly, we have
\begin{align}\label{Reg2ele}
\sum_{\substack{m_0\mid a}}d(m_0)b(m_0)\sqrt{m_0}&=\prod_{\substack{p\mid a}}(1+2b(p)\sqrt{p}) \nonumber\\
&\leq\prod_{\substack{p\mid a\\{\mathcal{L}}^2\leq p\leq\exp((\log \mathcal{L})^2)}}\bigg(1+\frac{\mathcal{L}}{\log \mathcal{L}}\bigg) \nonumber\\
&\leq \bigg(1+\frac{\mathcal{L}}{\log \mathcal{L}}\bigg)^{\omega_\mathcal{L}(a)},
\end{align}
where
\begin{equation*}
 \omega_\mathcal{L}(a):=\sum_{\substack{p\mid a\\{\mathcal{L}}^2\leq p\leq\exp((\log \mathcal{L})^2)}} 1.
\end{equation*}
Applying the `record-breaking' argument  (\cite[Theorem 2.9]{MV07}) to $\omega_\mathcal{L}(a)$, we find a trivial bound $\omega_\mathcal{L}(a)\leq \frac{\log a}{2\log \mathcal{L}}$.
Inserting this bound to (\ref{Reg2ele}), by a little caculation, it's easy to show, for any $A>0$,
\begin{equation}\label{Reg2ele2}
\bigg(1+\frac{\mathcal{L}}{\log \mathcal{L}}\bigg)^{\omega_\mathcal{L}(a)}\leq \frac{a^{1/2}}{(\log a)^A},
\end{equation}
which is valid for $a \gg \exp (\varpi \sqrt{\log X})$ and $X$ sufficient large. We apply (\ref{Reg2ele})(\ref{Reg2ele2}) to (\ref{Reg2ineq2}), and by a little abuse of letters, we write $q=\gamma a$. After breaking the range of $q$ into dyadic segments, we find
\begin{equation*}\label{EpsilonQ}
 \mathcal{E}_2\ll
 (\log X)^3 \prod_{p} \Big(1+2\frac{b(p)}{\sqrt{p}}+b(p)^2\Big)
 \sum_{\substack{Q=2^j \\ \exp(\varpi \sqrt{\log X}) \ll Q \ll X^{\frac{2}{15}}}} \mathcal{E}(Q),
\end{equation*}
where
\begin{equation*}
\mathcal{E}(Q):={(\log Q)}^{-A} \sum_{\chi \in S(Q)}\Big|\sum_n \Lambda(n) \Phi_{s_0}(n / X) \chi(n)\Big|.
\end{equation*}
To bound $\mathcal{E}_2$ in Regime \uppercase\expandafter{\romannumeral2}  it may suffices to show, for some absolute constant $B>0$ large enough, that
\begin{equation}\label{Reg2resu}
\mathcal{E}(Q)\ll \frac{X}{(\log X)^B}.
\end{equation}
In Regime \uppercase\expandafter{\romannumeral2} we employ zero-density estimates, the proof is similar to \cite[Section 6.4]{BP22}. We begin by writing $\Phi_{s_0}$ as the integral of its Mellin transform, yielding
\begin{equation*}
\sum_n \Lambda(n) \Phi_{s_0}(n / X) \chi(n)=\frac{1}{2 \pi i} \int_{(2)} X^w \check{\Phi}\left(w+s_0 / 2\right)\left(-\frac{L^{\prime}}{L}(w, \chi)\right) \dd w.
\end{equation*}
Observe that from repeated integration by parts we have
\begin{equation}\label{rapdeca}
\left|\check{\Phi}\left(\sigma+i t+s_0 / 2\right)\right| \ll_{\sigma, j}(\log X)^j\left(1+\left|t-\frac{\operatorname{Im}\left(s_0\right)}{2}\right|\right)^{-j}
\end{equation}
for every non-negative integer $j$.
We shift the line of integration to $\operatorname{Re}(w)=-1 / 2$, leaving residues from all of the zeros of $L(w, \chi)$ in the critical strip. We bound the new integral by applying the estimate
\begin{equation*}
\left|\frac{L^{\prime}}{L}(w, \chi)\right| \ll \log (q|w|)
\end{equation*}
valid for $\operatorname{Re}(w)=-1 / 2$, and deduce that
\begin{equation*}
\sum_n \Lambda(n) \Phi_{s_0}(n / X) \chi(n)=-\sum_{\substack{L(\rho, \chi)=0 \\ 0 \leq \beta \leq 1}} X^\rho \check{\Phi}\left(\rho+s_0 / 2\right)+O\left(\frac{(\log X)^{O(1)}}{X^{1 / 2}}\right) .
\end{equation*}
We have written here $\rho=\beta+i \gamma$. The error term is, of course, completely acceptable for (\ref{Reg2resu})when summed over $\chi \in S(Q)$.

By (\ref{rapdeca}), the contribution to $\mathcal{E}(Q)$ from those $\rho$ with $|\gamma|>Q^{1 / 2}$ is $\ll X Q^{-100}$, say, and this gives an acceptable bound. We have therefore obtained
\begin{equation}\label{zero des}
\mathcal{E}(Q)\ll X\exp (-\varpi \sqrt{\log X})+{(\log Q)}^{-A}\sum_{\chi \in S(Q)}\sum_{\substack{L(\rho, \chi)=0 \\ 0 \leq \beta \leq 1\\ |\gamma|\leq Q^{1/2}}} X^{\beta}.
\end{equation}
In order to bound the right side of (\ref{zero des}), we first need to introduce some notation. For a primitive Dirichlet character $\chi$ modulo $q$, let $N(T, \chi)$ denote the number of zeros of $L(s, \chi)$ in the rectangle
\begin{equation*}
0 \leq \beta \leq 1, \quad|\gamma| \leq T.
\end{equation*}
For $T \geq 2$, say, we have \cite[chapter 12]{Da00}
\begin{equation}\label{zero est}
N(T, \chi) \ll T \log (q T).
\end{equation}
For $1 / 2 \leq \alpha \leq 1$, define $N(\alpha, T, \chi)$ to be the number of zeros $\rho=\beta+i \gamma$ of $L(s, \chi)$ in the rectangle
\begin{equation*}
\alpha \leq \beta \leq 1, \quad|\gamma| \leq T,
\end{equation*}
and define
\begin{equation*}
N(\alpha, Q, T)=\sum_{q \leq Q} {\sum_{\chi(\bmod q)}}^{*} N(\alpha, T, \chi) .
\end{equation*}
The summation over $\chi$ is over primitive characters. We shall employ Montgomery's zero-density estimate \cite[Theroem 1]{Mo69}
\begin{equation}\label{Mon's est}
N(\alpha, Q, T) \ll(Q^2 T)^{\frac{5}{2}(1-\alpha)}{(\log QT)}^{13},
\end{equation}
which holds for $\alpha \geq 4 / 5$.\\
In (\ref{zero des}), we separate the zeros $\rho$ according to whether $\beta<4 / 5$ or $\beta \geq 4 / 5$. Using (\ref{zero est}) we deduce
\begin{equation}\label{beta<}
{(\log Q)}^{-A} \sum_{\chi \in S(Q)} \sum_{\substack{L(\rho, \chi)=0 \\ 0 \leq \beta<4 / 5 \\|\gamma| \leq Q^{1 / 2}}} X^\beta \ll {(\log Q)}^{-A+1}X^{4 / 5} Q^{3/2}.
\end{equation}
For those zeros with $\beta \geq 4 / 5$ we write
\begin{equation*}
X^\beta=X^{4 / 5}+(\log X) \int_{4 / 5}^\beta X^\alpha \dd  \alpha.
\end{equation*}
We then embed $S(Q)$ into the set of all primitive characters with conductors $\leq Q$. Applying (\ref{zero est}) and (\ref{Mon's est}), we obtain
\begin{align*}
\sum_{\chi \in S(Q)} \sum_{\substack{L\left(\rho, \chi_q\right)=0 \\ 4 / 5 \leq \beta \leq 1 \\|\gamma| \leq Q^{1 / 2}}} X^\beta & \ll X^{4 / 5} Q^{3 / 2}\log Q+(\log X) \int_{4 / 5}^1 X^\alpha N\left(\alpha, Q, Q^{1 / 2}\right) \dd \alpha \\
& \ll X^{4 / 5} Q^{3 / 2}\log Q+ {(\log Q)}^{15}\int_{4 / 5}^1 X^\alpha Q^{\frac{25}{4}(1-\alpha)} \dd \alpha .
\end{align*}
Since $Q \ll X^{2 / 15}$, the integrand of this latter integral is maximized when $\alpha=1$. It follows that
\begin{equation}\label{qfinal}
{(\log Q)}^{-A}\sum_{\chi \in S(Q)} \sum_{\substack{L\left(\rho, \chi_q\right)=0 \\ 4 / 5 \leq \beta \leq 1 \\|\gamma| \leq Q^{1 / 2}}} X^\beta \ll X^{4 / 5} Q^{3 / 2}{(\log Q)}^{-A+1}+ X{(\log Q)}^{-A+15}\ll X{(\log Q)}^{-A+15}.
\end{equation}
Combining (\ref{qfinal}), (\ref{beta<}), and (\ref{zero des}) yields
\begin{equation*}
\mathcal{E}(Q) \ll X{(\log Q)}^{-A+15}+ X \exp (-\varpi \sqrt{\log X}),
\end{equation*}
and this suffices for (\ref{Reg2resu}) when $A$ large enough.

\subsection{Evaluation of $\mathcal{E}_3$}
In Regime \uppercase\expandafter{\romannumeral3}
we have $X^{2/15}\ll Q\ll M^2X^{\frac{1}{2}+\varepsilon}$.
Here we use Vaughan's identity
\begin{equation*}\label{eqn:vaughan'sidentity}
  \Lambda(n)=\Lambda_{\leq V}(n)+
  (\mu_{\leq V}\star \log)(n)-
  (\mu_{\leq V}\star \Lambda_{\leq V}\star 1)(n)+
  (\mu_{> V}\star \Lambda_{> V}\star 1)(n),
\end{equation*}
where $V^2\leq X$, $\star$ is the Dirichlet convolution,
for an arithmetic $f$ we denote $f_{\leq V}(n)=f(n)$ for $n\leq V$ and zero for otherwise, and $f_{>V}(n)=f(n)-f_{\leq V}(n)$.
For $n\asymp X$, $\Lambda_{\leq V}(n)=0$, this reduces the estimation of three different sums, and this have done by Baluyot and Pratt\cite{BP22}.

\begin{proposition}\label{prop:regime3}
With the same notions as before, we have
\begin{equation*}\label{eqn:Baluyot_Pratt1}
    \sum_{\chi \in S(Q)}
    \Big|\sum_n (\mu_{\leq V}\star \log)(n) \Phi_{s_0}(n / X) \chi(n)\Big| \ll
    Q^{\frac{3}{2}}VX^{\varepsilon},
\end{equation*}
\begin{equation*}\label{eqn:Baluyot_Pratt2}
    \sum_{\chi \in S(Q)}
    \Big|\sum_n (\mu_{\leq V}\star \Lambda_{\leq V}\star 1)(n)
    \Phi_{s_0}(n / X) \chi(n)\Big| \ll
    Q^{\frac{3}{2}+\varepsilon}V^2,
\end{equation*}
and
\begin{equation*}\label{eqn:Baluyot_Pratt3}
    \sum_{\chi \in S(Q)}
    \Big|\sum_n (\mu_{> V}\star \Lambda_{> V}\star 1)(n)
    \Phi_{s_0}(n / X) \chi(n)\Big| \ll
    Q^{\frac{1}{2}}X^{\varepsilon}
    (MX^{\frac{3}{4}}+XV^{-\frac{1}{2}}
    +XQ^{-\frac{1}{2}}).
\end{equation*}
\end{proposition}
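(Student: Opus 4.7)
The plan is to estimate the three sums by the same strategy as in \cite{BP22}, tailored to primitive quadratic characters of conductor $\asymp Q$ in the set $S(Q)$. The sums \eqref{eqn:Baluyot_Pratt1} and \eqref{eqn:Baluyot_Pratt2} are of Type I in the usual sense of Vaughan's method (a ``short'' variable convolved with a smoothly weighted long variable), while \eqref{eqn:Baluyot_Pratt3} is of genuine Type II (a bilinear form in two variables each of length $\geq V$). For all three, the smooth cutoff $\Phi_{s_0}(n/X)$ with $\Re(s_0)=1/\log X$ and $|\Im(s_0)|\leq (\log X)^2$ is harmless because $\Phi_{s_0}$ and its derivatives satisfy polynomial-in-$\log X$ bounds uniformly in the relevant range.

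For \eqref{eqn:Baluyot_Pratt1}, I would write $(\mu_{\leq V}\star\log)(n)=\sum_{a\leq V}\mu(a)\log(n/a)\mathbf{1}_{a\mid n}$ and interchange summations, giving, for each $\chi\in S(Q)$,
\begin{equation*}
\sum_{a\leq V}\mu(a)\chi(a)\sum_{b}\log(b)\,\Phi_{s_0}(ab/X)\chi(b).
\end{equation*}
For each fixed $a\leq V$, apply partial summation to the inner $b$-sum using the uniform bound $\int_0^\infty |F'(y)|\,\dd y\ll \log X$ for $F(y)=\log(y)\Phi_{s_0}(ay/X)$, reducing matters to the incomplete character sum $\sum_{b\leq y}\chi(b)$. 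P\'olya--Vinogradov gives $\sum_{b\leq y}\chi(b)\ll Q^{1/2}\log Q$, so the inner sum is $O(Q^{1/2}X^{\varepsilon})$. Summing trivially over $a\leq V$ and over $|S(Q)|\ll Q$ characters yields \eqref{eqn:Baluyot_Pratt1}. The bound \eqref{eqn:Baluyot_Pratt2} is analogous: expand $(\mu_{\leq V}\star\Lambda_{\leq V}\star 1)(n)=\sum_{abc=n,a,b\leq V}\mu(a)\Lambda(b)$, fix $a,b\leq V$, and handle the $c$-sum identically. Using $\sum_{b\leq V}\Lambda(b)\ll V$, the two outer sums contribute a factor $V\cdot V=V^2$, and the character sum over $S(Q)$ contributes $Q^{3/2+\varepsilon}$, producing \eqref{eqn:Baluyot_Pratt2}.

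The genuine difficulty is \eqref{eqn:Baluyot_Pratt3}. Writing $\alpha_m=\mu_{>V}(m)$ and $\beta_k=(\Lambda_{>V}\star 1)(k)$, the sum becomes
\begin{equation*}
S_\chi=\sum_{m,k}\alpha_m\beta_k\,\Phi_{s_0}(mk/X)\chi(m)\chi(k),
\end{equation*}
with $m,k\geq V$ and $mk\asymp X$. I would dyadically decompose into $m\asymp M_1$, $k\asymp K_1$ with $V\leq M_1,K_1\leq X/V$ and $M_1K_1\asymp X$, and by symmetry assume $M_1\leq K_1$. Apply Cauchy--Schwarz in the $\chi$-variable, then expand $|S_\chi|^2$ and use $\chi^2=\mathbf{1}_{(\cdot,q)=1}$ for quadratic $\chi$ to reduce the resulting character moment to Heath-Brown's quadratic large sieve \cite{HB95}. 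Doing Cauchy--Schwarz against $\alpha$ (resp.\ $\beta$) inside, and then the large sieve with $\|\alpha\|_2^2\ll M_1 X^\varepsilon$, $\|\beta\|_2^2\ll K_1 X^\varepsilon$, gives two complementary estimates for $\sum_\chi|S_\chi|$. The middle term $Q^{1/2}XV^{-1/2}$ comes from the intermediate dyadic ranges where $M_1\geq V$; the term $X$ (which is $Q^{1/2}\cdot XQ^{-1/2}$) comes from applying the quadratic large sieve to the full bilinear form when $Q\leq X$; and the term $Q^{1/2}MX^{3/4}$ arises from the extreme case $M_1\asymp X^{1/2}$, where $\|\alpha\|_2^2\ll X^{1/2}$ forces the worse estimate.

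The main obstacle, as in \cite{BP22}, is coordinating the dyadic decomposition of $M_1,K_1$ with the two possible directions of Cauchy--Schwarz so as to land on the most favorable of the three terms in every range, and to ensure the final bound, multiplied by the outer arithmetic factor $\sum_{m_0\mid a}d(m_0)b(m_0)\sqrt{m_0}\ll a^{1/2}(\log a)^{-A}$ from the Regime III preprocessing, still yields $\mathcal{E}_3=O(\mathcal{M}_2 X^{-\varepsilon})$ for the admissible range $\theta\leq \frac{1}{19}-\varepsilon$. The choice $V=(X/M)^{1/6}$ is calibrated exactly so that $MX^{3/4}$, $XV^{-1/2}$, and $XQ^{-1/2}$ balance against the restriction $Q\gg X^{2/15}$ imposed in Regime III, producing the exponent $\frac{1}{19}$ appearing in Theorem \ref{thm1}.
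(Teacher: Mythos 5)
Your proposal is correct and follows essentially the same route as the paper, which proves this proposition simply by citing the corresponding estimates (6.5.2)--(6.5.5) of Baluyot--Pratt \cite{BP22}: Type I sums \eqref{eqn:Baluyot_Pratt1}--\eqref{eqn:Baluyot_Pratt2} via partial summation and P\'olya--Vinogradov, and the Type II sum \eqref{eqn:Baluyot_Pratt3} via dyadic decomposition, Cauchy--Schwarz, and Heath-Brown's quadratic large sieve, with the term $Q^{1/2}MX^{3/4}$ coming from the $Q\cdot Q$ cross term $QX^{1/2}\ll Q^{1/2}MX^{3/4+\varepsilon}$ under $Q\ll M^2X^{1/2+\varepsilon}$ rather than from the range $M_1\asymp X^{1/2}$ as you suggest. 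The only technical point worth noting is that Heath-Brown's large sieve bounds the mean square by $\sum_{n_1n_2=\square}|a_{n_1}a_{n_2}|$ rather than $\|a\|_2^2$, which for the divisor-bounded coefficients $\mu_{>V}$ and $\Lambda_{>V}\star 1$ still gives the $M_1X^{\varepsilon}$ and $K_1X^{\varepsilon}$ factors you use.
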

\begin{proof}
  See \cite{BP22} (6.5.2)--(6.5.5).
\end{proof}

Similar to \eqref{Reg2ineq2}, we have
\begin{multline*}
 \mathcal{E}_3\ll
 \prod_{p} \Big(1+2\frac{b(p)}{\sqrt{p}}+b(p)^2\Big)
 \sum_{\substack{Q=2^j \\ X^{2/15}\ll Q \ll M^2 X^{1 / 2+\varepsilon}}} M^{\frac{3}{2}+\varepsilon}Q^{-\frac{1}{2}}
 \sum_{\chi \in S(Q)}\Big|\sum_n \Lambda(n) \Phi_{s_0}(n / X) \chi(n)\Big|
 \\ \ll
\left(M^{\frac{7}{2}}X^{\frac{1}{2}+3\varepsilon} V^2
+ M^{\frac{5}{2}} X^{\frac{3}{4}+2\varepsilon}
+M^{\frac{3}{2}}V^{-\frac{1}{2}}X^{1+2\varepsilon}
+ M^{\frac{3}{2}} X^{\frac{14}{15}+2\varepsilon}\right)
\prod_{p} \Big(1+2\frac{b(p)}{\sqrt{p}}+b(p)^2\Big).
\end{multline*}
First let $\theta\leq \frac{2}{45}-3\varepsilon$, then $M^{\frac{5}{2}} X^{\frac{3}{4}+2\varepsilon}+M^{\frac{3}{2}} X^{\frac{14}{15}+2\varepsilon}\ll X^{1-\varepsilon}$, then let $M^2V^{\frac{5}{2}}=X^{\frac{1}{2}}$,
i.e. $V=X^{\frac{1-4\theta}{5}}$,
thus
\begin{equation*}
  \mathcal{E}_3\ll \mathcal{M}_2X^{-\varepsilon}.
\end{equation*}
Then we have Proposition \ref{prop:R2}.

\end{proof}

\section{Concluding remarks}\label{remarks}

The study of the distribution of central values of automorphic $L$-functions in quadratic twists families is also active.
Radziwi{\l \l} and Soundararajan\cite{RS15} established an asymptotic formula for the first moment of central $L$-values
of quadratic twists of elliptic curves, and they used this result to derive a one-sided central limit theorem and unconditional upper bounds of small moments corresponding to Keating--Snaith conjecture\cite{KS00b}.
Shen\cite{Sh22} set up asymptotic formulas with small error term for the first moment of quadratic twists of modular $L$-functions and the first derivative of quadratic twists of modular $L$-functions.
Recently, Li\cite{Li22} got an asymptotic formula for the second moment of quadratic twists of modular $L$-functions, which was previously known conditionally on the Generalized Riemann Hypothesis by the work of Soundararajan and Young\cite{SY10}.
Combining Soundararajan's third moment method\cite{So00}, Young's recursion method\cite{Yo09} and Heath-Brown's large sieve for quadratic characters\cite{HB95}, the second named author and Huang\cite{HH22a} established asymptotic formulas for the twisted first moment of twisted $\GL(3)$ $L$-functions, and they employed these to determine the correspondence for the original Hecke--Maass cusp form or its dual form by central $L$-values.
Also recently, Gao and Zhao\cite{GZ23} built an asymptotic formula for the twisted first moment of central values of the product of a quadratic Dirichlet $L$-function and a quadratic twist of a modular $L$-function.

Hoffstein and Lockhart\cite{HL99} extended Heath-Brown's idea to prove an extreme central values result for quadratic twists of modular $L$-functions.
Recently, the second named author and Huang\cite{HH22c} got an extreme central $L$-values result for almost prime quadratic twists of an elliptic curve $E$, motivated by studying how large the extreme orders of the Tate--Shafarevich groups in the quadratic twist family of an elliptic curve are under the Birth--Swinnerton-Dyer conjecture. The restriction ``almost prime quadratic twists" is just used to control the size of the Tamagawa numbers.
Tamagawa number is bounded by the square of the divisor function trivially, which is much larger than both the conjectured order of magnititude of the extrema of the $L$-function and the order of magnititudethe of the extremum of the $L$-function that can be detected at present.

Selberg's central limit theorem says that for large $T$, when $t$ is chosen uniformly in $[T,2T]$, $\log\zeta(\frac{1}{2}+it)$ distributes like a complex Gaussian with mean 0 and variance $\log\log T$.
Keating-Snaith\cite{KS00a,KS00b}, based on Katz--Sarnak's philosophy\cite{KS99}, guessed similar distributions for families of $L$-functions and divided it into three classes of families: Unitary families, Symplectic families and Orthogonal families, which are corresponding to three classes of random matrices.
Keating--Snaith also conjectured the asymptotic formulas of moments of $L$-functions based on random matrix theory. Afterwards, Diaconu--Goldfeld--Hoffstein\cite{DGH03} and Conrey--Farmer--Keating--Rubinstein--Snaith\cite{CFKRS05} made it possible to speculate the complete main term for even moments.
By using these asymptotic formulas, it is also possible to do some researches on moments conjecture in quadratic twists families.
The techniques mainly come from studies of the Riemann $\zeta$-function in three types:
unconditional upper bounds for small moments\cite{BCR17,HRS19,HB81}, upper bounds assuming the Riemann Hypothesis\cite{Ha13,So09} and unconditional lower bounds for moments\cite{HS22,HB81,Mo71,RS13,Ra78,Ra80}.
Similar progress in quadratic twists families includes: unconditional upper bounds for small moments\cite{RS15}, upper bounds assuming Generalized Riemann Hypothesis\cite{GZ21a,MT14,Sono16,So09} and
unconditional lower bounds for moments\cite{GZ21b,HH22b,RS06}.
Recently, the second named author and Huang\cite{HH22b} gained unconditional lower bounds of moments of central values of quadratic twisted self-dual $\GL(3)$ $L$-functions based on the Heap--Soundararajan's lower bounds principle\cite{HS22}.
For progress on these topics in other families, see \cite{BFKMMS23,So21}.

\section*{Acknowledgements}
The authors would like to thank Professors Bingrong Huang, Jianya Liu, and Xianchang Meng for their help and encouragements.
The authors would like to thank the reviewers for their valuable comments.


\end{document}